\theoremstyle{plain}
   \newtheorem{theorem}{Theorem}[section]
 \theoremstyle{definition}
	\newtheorem{remark}[theorem]{Remark} 
\def\CU{\mathcal{U}}
\def\CV{\mathcal{V}}
\def\CO{\mathcal{O}}
\def\CF{\mathcal{F}}
\def\CB{\mathcal{B}}
\def\C{\mathbf{C}}
\def\R{\mathbf{R}}
\def\N{\mathbf{N}}
\def\Z{\mathbf{Z}}
\def\T{\mathbf{T}}
\def\P{\mathbf{P}}
\let\d\partial
\def\E{\mathbf{E}}
\def\CS{\mathcal{S}}
\def\CC{\mathcal{C}}
\def\bn{\mathbf{n}}
\newcommand{\eqdef}{\stackrel{\mbox{\tiny\textrm{def}}}{=}}
\newcommand{\eqlaw}{\stackrel{\mbox{\tiny\textrm{law}}}{=}}
\DeclareMathOperator{\Var}{Var}
\DeclareMathOperator{\Cov}{Cov}
\def\one{\mathbf{1}}
\def\Wick#1{\mathord{{:}#1{:}}}
\let\eps\varepsilon
\def\even{\mathrm{even}}
\def\odd{\mathrm{odd}}
\def\dash{\leavevmode\unskip\kern0.18em--\penalty\exhyphenpenalty\kern0.18em}
\def\slash{\leavevmode\unskip\kern0.15em/\penalty\exhyphenpenalty\kern0.15em}
\DeclareRobustCommand{\TitleEquation}[2]{\texorpdfstring{\StrLeft{\f@series}{1}[\@firstchar]$\if%
b\@firstchar\boldsymbol{#1}\else#1\fi$}{#2}}
\let\f\frac
\numberwithin{equation}{section}
\pgfplotsset{compat=1.17}
\begin{document}
\firstpage{1}
\doinumber{210}
\volume{1}
\copyrightyear{2022}

%------
% Insert the title of your paper and (if necessary)
% a short title for the running head.
%------
\title{The work of Hugo Duminil-Copin}
\titlemark{Hugo Duminil-Copin}

%------
% Insert full names of the authors.
% Add further authors as follows:
%  \emsauthor{2}{NAME INCL. FULL FIRST NAME}{NAME WITH FIRST NAME INITIALS}
%  \emsauthor{3}{NAME INCL. FULL FIRST NAME}{NAME WITH FIRST NAME INITIALS}
% etc.
%------
\emsauthor{1}{Martin Hairer}{M.~Hairer}

%------
% Use \authormark if the list of authors is too
% long for the running head: \authormark{A.~Doe et al.}
%------

%------
% Add one \emsaffil and one \email for each author.
%------
\emsaffil{1}{Imperial College London, UK \email{m.hairer@imperial.ac.uk}}

%\dedication{Dedicated to ...}

%------
% Add MSC 2020 codes according to www.ams.org/msc/msc2020.html:
% one primary code (in curly braces) 
% and a list of secondary codes separated by commas (in square brackets)
%------
\classification[82B26, 82B43]{82B20}

%------
% Add a list of keywords.
%------
\keywords{Ising model, Potts model, percolation}

%------
% Insert your abstract.
%------
\begin{abstract}
The past decade has seen tremendous progress in our understanding of 
the behaviour of many probabilistic models at or near their ``critical point''.
On the 5th of July 2022, Hugo Duminil-Copin was awarded the Fields medal for
the crucial role he played in many of these developments. In this short review article,
we will try to put his work into context and present a small selection of his results.
\end{abstract}

\maketitle

%------
% INSERT THE BODY OF THE PAPER HERE (except
% acknowledgments, funding info and bibliography)
%------

%\tableofcontents

%------
% Insert acknowledgments and information
% regarding funding at the end of the last
% section, i.e., right before the bibliography.
%------

\section{Introduction}

Hugo Duminil-Copin was awarded the Fields medal in Helsinki
during the opening ceremony of the 2022 virtual ICM.
In this short note, I will try to put his work into context and to give the reader
a glimpse of why the questions it addresses are not only very interesting from a purely mathematical
perspective, but also contribute to further our understanding of nature at a fundamental level. 
I should start first of all with a disclaimer. Hugo Duminil-Copin is an astounding problem solver
and, while his interest falls squarely into the general area of probability theory and in
particular the type of probabilistic problems that arise when studying microscopic
models for statistical mechanics, I will not be able to do justice to the breadth of his 
contributions. Furthermore, my own area of expertise is somewhat tangential to that of Duminil-Copin,
so this note should be taken as the point of view of an interested outsider. In particular, any
misrepresentations of his results and \slash or techniques will be entirely due to my own ignorance.

In its broadest form, classical statistical mechanics can be thought of as the study of the 
global behaviour of ``large'' systems (of ``size'' $N \gg 1$) that are comprised of many 
identical ``small'' subsystems 
interacting with each other. One typically indexes the subsystems by a discrete
set $\Lambda_N$ with $\lim_{N \to \infty} |\Lambda_N| = \infty$ and one is interested in 
 quantities that are stable as $N \to \infty$. In many cases of interest, one has $\Lambda_N \subset \Lambda$ for $\Lambda$ a discrete subset of Euclidean space (typically a regular lattice) 
 and its elements are
interpreted as a physical location of the corresponding subsystem; the interaction between
subsystems may then depend on their locations. (In most models they actually depend 
only on their relative positions, a notion that generalises very well to locations
taking values in more general symmetric spaces.)

Let us write $S$ for the state space of one single such subsystem, so that the state space for
the full system is $\CS_N \eqdef S^{\Lambda_N}$.
In \textit{equilibrium statistical mechanics}, we furthermore assume that $S$ is equipped 
with a ``reference'' probability measure $\mu$ (think of $\mu$ as being normalised counting measure
if $S$ is a finite set, normalised volume measure if it is a compact manifold, etc) and that our
system is described by an \textit{energy function} $H^{(N)} \colon \CS_N \to \R$, which is typically 
comprised of a contribution for each subsystem, as well as additional interaction terms.
In full generality, one would have something like
\begin{equ}[e:generalH]
H^{(N)}(\sigma) = \sum_{A \subset \CS_N} H_A(\sigma_A)\;,
\end{equ}
where $\sigma_A$ denotes the restriction of $\sigma \in S^{\Lambda_N}$ to $S^{A}$
and the function $H_A$ typically only depends on the ``shape'' of the subset $A$,
so satisfies natural invariance properties under translations and possibly reflections and / or 
discrete rotations. In many classical models, the only non-vanishing terms in \eqref{e:generalH}
are those with $|A| \le 2$.

Given such an energy function, we obtain a probability measure $\mu_{\beta,N}$ on $\CS_N$
by setting
\begin{equ}[e:Gibbs]
\mu_{\beta,N}(d\sigma) = Z_{\beta,N}^{-1} \exp \bigl(- \beta H^{(N)}(\sigma)\bigr) \prod_{u \in \Lambda_N}\mu(d\sigma_u)\;,
\end{equ}
where $Z_{\beta,N}$ is chosen in such a way that $\mu_{\beta,N}(\CS_N) = 1$.
Physically, the parameter $\beta > 0$ appearing in this expression is the inverse of the 
temperature of the system. 
To a large extent, (equilibrium) statistical mechanics is the study of $\mu_{\beta,N}$ as $N \to \infty$
with a particular emphasis on the behaviour under $\mu_{\beta,N}$ of observables that take a
``macroscopic'' (of the order of the size of the domain $\Lambda_N$) or ``mesoscopic'' 
(tending to infinity as $N \to \infty$ but much smaller than $|\Lambda_N|$) number of components of $\sigma$ into account.

\subsection{Bernoulli percolation}

The simplest such example is that of $S = \{-1,1\}$, $H_N = 0$, and $\mu(\{-1\}) = \mu(\{1\}) = \f12$.
Regarding the index set $\Lambda_N$, we consider the case of the even elements of a large box in $\Z^2$,
namely $\Lambda_N = \{u\in \{-N,\ldots,N\}^2\,:\, u_1 + u_2\,\mathrm{even}\}$.
(The reason why we make this strange choice rather than simply taking all elements of $\{-N,\ldots,N\}^2$
will soon become clear.) 

One of the simplest kind of ``global'' observables for this system is given by the following kind of
linear statistics. Given a 
smooth function $\phi \colon [-1,1]^2 \to \R$, we define $I_\phi^N \colon \CS_N \to \R$ by
\begin{equ}[e:localAverages]
I_\phi^N(\sigma) = N^{-\alpha} \sum_{u \in \Lambda_N} \sigma_{u} \phi\big(u/N\big)\;.
\end{equ}
Note that this is exhaustive: for any fixed $N$,
if we know $I_\phi^N(\sigma)$ for every smooth function $\phi$, then
we can in principle recover the argument $\sigma$ itself. A version of the central limit theorem
then immediately yields the following result:

\begin{theorem}\label{theo:CLT}
Setting $\alpha = 1$, the joint distribution of $I_\phi^N(\sigma)$
for any finite collection of test functions $\phi$ as above converges as $N \to \infty$
to the law of a collection of jointly centred Gaussian random variables $I_\phi$ 
such that 
\begin{equ}
\E I_\phi\, I_\psi = \f12\int_{[-1,1]^2} \phi(x)\psi(x)\,dx\;.
\end{equ}
\textup{(The factor $\f12$ appearing here comes from the fact that the local density of $\Lambda_N$ in $\Z^2$ is $\f12$.)}
\end{theorem}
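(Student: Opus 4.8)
Since $H_N=0$, the Gibbs measure \eqref{e:Gibbs} reduces to the product measure $\prod_{u\in\Lambda_N}\mu(d\sigma_u)$, so under $\mu_{\beta,N}$ the family $(\sigma_u)_{u\in\Lambda_N}$ consists of i.i.d.\ symmetric $\pm1$ variables and each $I_\phi^N$ is a sum of independent, centred, uniformly bounded terms; we are thus squarely in the setting of the Lindeberg central limit theorem, and the only real content is the identification of the limiting covariance. Since $\phi\mapsto I_\phi^N$ is linear, any linear combination $\sum_j t_j I_{\phi_j}^N$ equals $I_\psi^N$ with $\psi=\sum_j t_j\phi_j$ again smooth, so by the Cram\'er--Wold device it suffices to prove $I_\psi^N\Rightarrow \mathcal N\bigl(0,v(\psi)\bigr)$ for every such $\psi$, with $v(\psi)=\f12\int_{[-1,1]^2}\psi(x)^2\,dx$. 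Polarisation then gives $\E\,I_\phi I_\psi=\f12\bigl(v(\phi+\psi)-v(\phi)-v(\psi)\bigr)=\f12\int_{[-1,1]^2}\phi(x)\psi(x)\,dx$, which is the asserted formula and which simultaneously pins down the full (Gaussian) joint law.

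To prove the one-dimensional statement, fix $\psi$ and write $I_\psi^N=\sum_{u\in\Lambda_N}X_u^N$ with $X_u^N=N^{-1}\sigma_u\psi(u/N)$, which are independent and centred with $\Var I_\psi^N=N^{-2}\sum_{u\in\Lambda_N}\psi(u/N)^2$. The points $\{u/N:u\in\Lambda_N\}$ form a grid in $[-1,1]^2$ of spacing of order $1/N$ but asymptotic density $\f12$ --- this is precisely why the even sublattice was singled out --- so, $\psi$ being continuous and $\partial([-1,1]^2)$ Lebesgue-null, the right-hand side is a Riemann sum converging to $v(\psi)$. The Lindeberg--Feller condition is immediate since $|X_u^N|\le N^{-1}\|\psi\|_\infty$: for instance $\sum_{u\in\Lambda_N}\E|X_u^N|^3\le N^{-3}\|\psi\|_\infty^3\,|\Lambda_N|=O(N^{-1})\to0$ while $v(\psi)>0$ unless $\psi\equiv0$ (a trivial case), so Lyapunov's criterion applies and $I_\psi^N\Rightarrow\mathcal N(0,v(\psi))$.

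An equivalent, perhaps more transparent route --- which is why Theorem~\ref{theo:CLT} ``immediately'' follows --- is to compute the characteristic function directly: by independence $\E\exp(\mathrm i t I_\psi^N)=\prod_{u\in\Lambda_N}\cos\bigl(tN^{-1}\psi(u/N)\bigr)$, and taking logarithms and using $\log\cos x=-\f{x^2}2+O(x^4)$ uniformly for $|x|\le1$, the error terms sum to $O(N^{-2})$ while the main term tends to $-\f{t^2}{2}v(\psi)$ by the same Riemann-sum argument, so $\E\exp(\mathrm i t I_\psi^N)\to\exp(-\f{t^2}2 v(\psi))$. There is genuinely no obstacle here; the only two points deserving a sentence of care are (i) that the sum in \eqref{e:localAverages} ranges over a lattice box while $\psi$ is defined on $[-1,1]^2$, so one invokes continuity of $\psi$ and null-ness of the boundary to pass to the Riemann-sum limit, and (ii) bookkeeping the density factor $\f12$, which is exactly the asymptotic proportion of even sites in $\Z^2$.
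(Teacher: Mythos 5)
Your proof is correct and is essentially the argument the paper has in mind: the paper gives no written proof beyond the remark that ``a version of the central limit theorem'' immediately yields the result, and your Cram\'er--Wold reduction followed by the Lindeberg\slash Lyapunov (or characteristic-function) computation is exactly that standard argument. The identification of the limiting covariance via Riemann sums over the even sublattice, with the density factor $\f12$, matches the paper's parenthetical explanation of where the $\f12$ comes from.
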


A much more interesting kind of global observables is given by the connectivity properties
of $\sigma$, which were first studied by Broadbent and Hammersley \cite{Percolation}. 
These are however \textit{much} harder to analyse and, even though the model
just described appears at first sight to be somewhat trivial, most of its results already lead us squarely 
into 21st century mathematics. 
In order to describe what we mean by ``connectivity'' in this context, instead of interpreting elements
 $u \in \Lambda_N$ as points in $\Z^2$, we interpret them as nearest-neighbour edges of
a suitable sublattice of $\Z^2$ by associating to $u$ the unique edge 
$e_u$ of $\Z_\even \times \Z_\odd$ with midpoint $u$.
We will also write $e_u^*$ for the edge of $\Z_\odd \times \Z_\even $ with midpoint $u$.
In other words, we set
\begin{equ}
e_u = 
\left\{\begin{array}{cl}
	(u_\downarrow, u_\uparrow) & \text{if $u_1$ is even,} \\
	(u_\leftarrow, u_\to) & \text{if $u_1$ is odd,}
\end{array}\right.\qquad
e_u^* = 
\left\{\begin{array}{cl}
	(u_\leftarrow, u_\to) & \text{if $u_1$ is even,} \\
	(u_\downarrow, u_\uparrow) & \text{if $u_1$ is odd.}
\end{array}\right.
\end{equ}
Here, given $u = (u_1,u_2) \in \Z^2$, we write $u_\leftarrow = (u_1-1, u_2)$, etc.
The endpoints of these edges do belong to the stated sublattices of $\Z^2$ since $u_1 + u_2$ is even, 
so either both $u_1$ and $u_2$ are even or both are odd.

Given a configuration $\sigma \in \CS_N$, we interpret edges $e_u$ with $\sigma_u = -1$ as ``open''
and draw them in black, while the remaining edges are considered ``closed'' and are drawn in light grey.
This yields a picture like shown on the left in Figure~\ref{fig:perco}.
We can then ask for example what is the probability $p_N$ that it is possible to go from the left 
boundary of the light gray graph to the right boundary (the "boundary" here consists of the 
ends of the dangling edges) while only traversing black edges. 
It turns out that this probability does take non-trivial values even for large values for $N$. 
As a matter of fact, it is independent of $N$ as the following classical result
(see for example \cite[Lem.~11.21]{Grimmett}) shows.

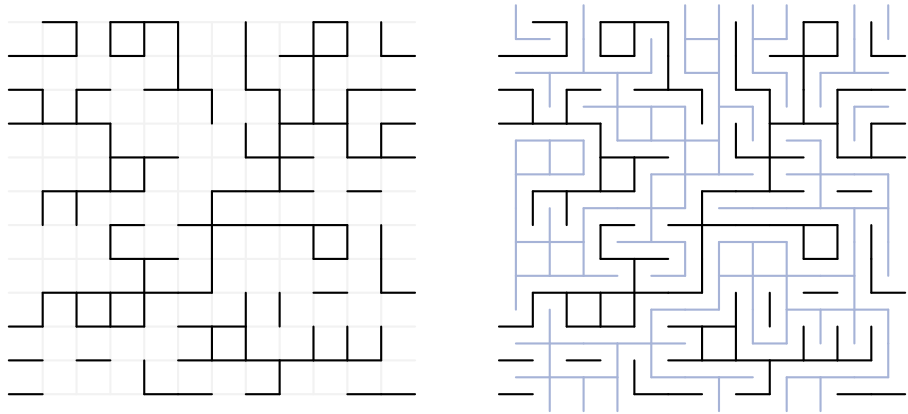
\begin{figure}\begin{center}
\begin{tikzpicture}[every path/.style={thick, black,line cap=round}]
	\pgfmathsetseed{42}%
    \def\h{0.45}
    \draw[white] (0,0.5*\h) rectangle ($(12*\h,12.5*\h)$);
    \foreach \x in {1,2,...,12} {
        \ifnum\x<12 
        	\draw[black!5] ($\h*(\x,1)$) -- ($\h*(\x,12)$);
		\fi
        \draw[black!5] ($\h*(0,\x)$) -- ($\h*(12,\x)$);
    	}

    \foreach \y in {1,2,...,12}{
        \foreach \x in {1,2,...,12}{
            \pgfmathrandominteger{\a}{1}{100}
            \ifnum\a>50 
                \draw ($\h*(\x-1,\y)$) -- ($\h*(\x,\y)$);
            \fi
            \ifnum\y>1
            \ifnum\x<12
                \pgfmathrandominteger{\a}{1}{100}
                \ifnum\a>50 
                    \draw ($\h*(\x,\y-1)$) -- ($\h*(\x,\y)$);
                \fi
            \fi\fi
        }
    }
\end{tikzpicture}%
\hspace{1cm}
\begin{tikzpicture}[every path/.style={thick, black,line cap=round}]
	\pgfmathsetseed{42}%
    \def\h{0.45}
    %
%    \foreach \x in {1,2,...,11} {
%        \draw[black!5] ($\h*(\x,1)$) -- ($\h*(\x,11)$);
%        \draw[black!5] ($\h*(1,\x)$) -- ($\h*(11,\x)$);
%    	}

    \foreach \y in {1,2,...,12}{
        \foreach \x in {1,2,...,12}{
            \pgfmathrandominteger{\a}{1}{100}
            \ifnum\a>50 
                \draw ($\h*(\x-1,\y)$) -- ($\h*(\x,\y)$);
            \else
                \draw[blue!40] ($\h*(\x-0.5,\y+0.5)$) -- ($\h*(\x-0.5,\y-0.5)$);
            \fi
            \ifnum\y>1
            \ifnum\x<12
                \pgfmathrandominteger{\a}{1}{100}
                \ifnum\a>50 
                    \draw ($\h*(\x,\y-1)$) -- ($\h*(\x,\y)$);
                \else
                    \draw[blue!40] ($\h*(\x-0.5,\y-0.5)$) -- ($\h*(\x+0.5,\y-0.5)$);
                \fi
            \fi\fi
        }
    }
\end{tikzpicture}%
\end{center}
\caption{On the left, we draw a typical percolation configuration for $N=11$. On the right,
the same configuration is drawn together with its dual configuration in light blue.}\label{fig:perco}
\end{figure}

\begin{theorem}
One has $p_N = \f12$ for every $N$.
\end{theorem}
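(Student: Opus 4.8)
The plan is to exploit the self-duality that the construction was designed to have. For a configuration $\sigma$, call $e_u$ \emph{open} when $\sigma_u = -1$ (the black edges of Figure~\ref{fig:perco}), and declare the dual edge $e_u^*$ to be \emph{open} exactly when $e_u$ is closed. Write $A_N$ for the event that some path of open primal edges joins the left boundary of the box to the right boundary, and $B_N$ for the event that some path of open dual edges joins the top boundary to the bottom boundary, so that $p_N = \P(A_N)$. The first observation is that $\P(A_N) = \P(B_N)$. Indeed, composing the reflection $\rho\colon (u_1,u_2)\mapsto(u_2,u_1)$, which preserves the box $\{-N,\dots,N\}^2$, with the spin-flip $\sigma\mapsto-\sigma$ yields a bijection of configuration space preserving $\mu_{\beta,N}$ (the value of $\beta$ being irrelevant since $H_N = 0$). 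The reflection $\rho$ exchanges the sublattices $\Z_\even\times\Z_\odd$ and $\Z_\odd\times\Z_\even$, carries each primal edge $e_u$ to the dual edge $e^*_{\rho(u)}$, and swaps the left--right and top--bottom directions, while the spin-flip turns ``$e_u$ open'' into ``$e^*_u$ open''; tracing through these identifications shows that the bijection maps $A_N$ onto $B_N$.

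The second ingredient is the planar dichotomy, a discrete Jordan curve statement: for each $N$, \emph{exactly one} of $A_N$ and $B_N$ occurs. A left--right open primal crossing cannot be prevented without the complementary (hence dual-open) edges forming a top--bottom barrier, which yields ``at least one'' of $A_N$, $B_N$; and such a crossing and such a barrier cannot coexist, since two transversal paths in the plane must meet, which yields ``at most one''. This is exactly \cite[Lem.~11.21]{Grimmett}. Consequently $\P(A_N) + \P(B_N) = 1$.

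Combining the two ingredients, $2\,\P(A_N) = \P(A_N) + \P(B_N) = 1$, so $p_N = \P(A_N) = \f12$; since $N$ entered nowhere essential, the answer does not depend on $N$. The step I expect to be the genuine obstacle is the planar dichotomy: while visually obvious from a picture like Figure~\ref{fig:perco}, a careful proof must handle the ``dangling'' half-edges sticking out of the box and the precise way the left/right boundary of the primal graph interlocks with the top/bottom boundary of the dual graph---it is exactly this bookkeeping that makes ``exactly one'' true, rather than merely ``at least one'' or ``at most one''. The symmetry argument of the first paragraph is routine, provided one checks that $\rho$ really does send primal edges to dual edges compatibly with the orientation conventions used to define $e_u$ and $e_u^*$.
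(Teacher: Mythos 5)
Your proof is correct and follows essentially the same route as the paper: duality plus the planar ``exactly one of the two crossings occurs'' dichotomy of \cite[Lem.~11.21]{Grimmett}, combined with a symmetry identifying the law of the open dual edges with that of the open primal edges, giving $2p_N=1$. The only cosmetic difference is that you implement the symmetry by a diagonal reflection composed with a spin flip, whereas the paper uses a rotation by $90^\circ$; both are valid measure-preserving identifications.
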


\begin{proof}
The trick is to observe that given a configuration $\sigma \in \CS_N$, if we draw the dual configuration
$\sigma^* \in \CS_N$ defined by $\sigma^*_u = -\sigma_u$ by colouring (in blue, say) the edges
$e^*_u$ with $\sigma^*_u = -1$, then we obtain a drawing with the property that blue edges 
never intersect black edges. As a consequence, it is possible to cross the square from left to right by traversing
only black edges if and only if it is \textit{not} possible to cross it from top to bottom by traversing only blue edges.
(See Figure~\ref{fig:perco}.) On the other hand, the law of the collection of blue edges is the same as that 
of the collection of black edges, only rotated by $90^\circ$, so that we must have
$p_N = 1-p_N$ as claimed.
\end{proof}

\begin{remark}\label{rem:critical}
If, instead of choosing edges to be open with probability $\f12$, we choose them to be open 
with some probability $p$, then we have $p_N \to 1$ for $p > \f12$ and $p_N \to 0$
for $p < \f12$. This is an example of \textit{phase transition}: an abrupt change in the
behaviour of some global observables as a parameter of the model is varied continuously.
In this specific example, we were able to determine the \textit{critical value} $p_c = \f12$ 
explicitly by exploiting an exact duality. 
\end{remark}

It is similarly possible to obtain a large collection of interesting global observables by
taking a shape $\CU \subset [-1,1]^2$ diffeomorphic to a square and considering the 
analogous event $A_\CU^{(N)} \subset \CS_N$  
asking whether it is possible to connect the left and right edges of $N \CU$
(without ever leaving $N \CU$) by a path following only open edges of a given configuration
$\sigma \in \CS_N$. Again, the knowledge of these events is an exhaustive statistics for any given
fixed value of $N$. It is furthermore known that for any finite number of such shapes 
$\{\CU_i\}_{i \in I}$ (for $I$ some finite index set) the
random variables $\{[A_{\CU_i}^{(N)}]\}_{i \in I}$ converge in law to a non-degenerate limit 
$\{[A_{\CU_i}]\}_{i \in I}$ as $N \to \infty$ \cite{Stas}. (Here, we write $[A]$ for the indicator function
of an event $A$.)
An amazing fact is that this scaling limit is conformally invariant: if $\phi \colon D \to D'$ is a
conformal map between two smooth simply connected domains $D, D' \subset \C$ such that $[-1,1]^2 \subset D$
and such that $\CV_i \eqdef \phi(\CU_i) \subset [-1,1]^2$, then the joint law of the 
random variables $\{[A_{\CV_i}]\}_{i \in I}$ is \textit{the same} as that of
$\{[A_{\CU_i}]\}_{i \in I}$. 

This conformal invariance turns out to be a crucial feature of 
the scaling limits of many equilibrium statistical mechanics models in two dimensions.
It provides a link to conformal field theory which, at a purely mathematical level, can be thought of
as the study of irreducible representations of the Virasoro algebra.
In particular, it strongly suggests that the possible large-scale behaviours
one can see for two-dimensional equilibrium models come in a one-parameter family of ``universality classes''
parametrised by the central charge of the corresponding conformal field theory.
(In the case of percolation, it turns out that this central charge is given by $c=0$.)

\subsection{The Ising model}

The next-``simplest'' model of statistical mechanics falling into the category of 
equilibrium models described above is the Ising model \cite{Lenz,Ising}. (See also the
review article \cite{HugoICM} in these proceedings which contains a more detailed
account of the various developments spawned by this model.)
In this case, the 
index set is given by $\Lambda_N = \{-N,\ldots,N\}^d$ for some $d \ge 1$, the reference
measure $\mu$ and local state space $S$ are as above, but this time one has
$H_A = 0$ unless $A = \{u,v\}$ with $u,v \in \Z^d$ such that $|u-v| = 1$, in which
case one sets $H_A(\sigma) = -\sigma_u \sigma_v$.
This time, the model has a non-trivial dependence on the parameter $\beta$ appearing 
in \eqref{e:Gibbs}, which plays a role somewhat similar to the parameter $p$
that appeared in Remark~\ref{rem:critical}. 

At a very qualitative level, the situation is somewhat similar to what happened in the case 
for percolation: in every
dimension $d \ge 2$ there exists a critical (dimension-dependent) value $\beta_c$ 
which delineates two different regimes. At ``high temperature'', namely for $\beta < \beta_c$,
the \textit{spontaneous magnetisation}, namely the random quantity 
$N^{-d} \sum_{i\in \Lambda_N} \sigma_i$, converges to $0$ in probability as
$N \to \infty$. For $\beta > \beta_c$ on the other hand, it converges in probability to a
limiting random variable that can take exactly two possible values $\pm h_\beta \neq 0$
with equal probabilities. The actual value of $\beta_c$ is only known in dimension~$2$ 
where it equals $\beta_c = \log \sqrt{1+\sqrt 2}$ \cite{Onsager}. (There is no phase transition
at all in dimension~$1$ and the spontaneous magnetisation always vanishes, so in some 
sense $\beta_c = +\infty$ there.)

It is again possible to ask the same questions as in the case of Bernoulli percolation. 
This time however even the analogue of Theorem~\ref{theo:CLT}, which was an essentially trivial
consequence of the central limit theorem (or at least a version thereof), is already highly
non-trivial. It was shown in a recent series of works \cite{Ising2,Ising3} that if 
one chooses $\beta = \beta_c$ and $\alpha = 15/8$ in the expression \eqref{e:localAverages} in dimension $d=2$,
then it converges in law to non-trivial limiting random variables, jointly for any fixed number of test functions. This time however the limiting distributions are not Gaussian (they actually exhibit 
an even faster decaying tail behaviour).
Note that the exponent $\alpha$ is closely related to the behaviour of $\E_c \sigma_u \sigma_v$ 
(where $\E_c$ denotes the expectation under the Gibbs measure \eqref{e:Gibbs} for the critical value
of the inverse temperature $\beta$) since, assuming that $\E_c \sigma_u \sigma_v \approx |u-v|^{-\delta}$,
one finds that 
\begin{equ}
\E_c \big(I_\phi^N(\sigma)\big)^2
= N^{-2\alpha} \sum_{u,v} \phi(u/N)\phi(v/N) \E_c \sigma_u \sigma_v
\lesssim N^{-2\alpha} \sum_{u,v} |u-v|^{-\delta} \approx N^{2d - (\delta \wedge d) -2\alpha}\;,
\end{equ}
so that one expects the relation $\alpha = d - (\delta \wedge d)/2$, which (correctly)
leads to the prediction $\delta = \f14$. This and a number of other properties of the Ising model at
criticality allow to associate it to the conformal field theory with central charge $c = \f12$.

The picture in higher dimensions is much less clear however. For $d \ge 5$, it was shown 
in \cite{Triv1,Triv2,Triv3} that the correct scaling exponent to use in \eqref{e:localAverages} at 
 $\beta  =\beta_c$ is $\alpha = 1+\f d2$ and that the
limit is a Gaussian Free Field, namely the Gaussian random distribution with correlation function 
given by the Green's function of the 
Laplacian (with Neuman boundary conditions on the square). In dimension $d=3$, virtually nothing
is known rigorously about the critical Ising model, not even the value of its scaling exponents,
although much progress has been made at a non-rigorous level with the development of 
the ``conformal bootstrap'' \cite{Bootstrap1,Bootstrap2}. Regarding the case $d=4$, it was somewhat 
unclear until very recently whether the Ising model at criticality should be ``trivial'' (i.e.\ described
by Gaussian distributions) or not. This was eventually settled by Aizenman and Duminil-Copin in
the work \cite{Phi44} where they show that any subsequential limit for expressions of the form
\eqref{e:localAverages} as $N \to \infty$ (and $\beta \to \beta_c$) must necessarily be Gaussian. 

In fact, some of the results just mentioned are shown for the ``lattice $\Phi^4$ model''
which is the equilibrium model with $S = \R$, as well as
\begin{equ}
H_{\{u\}}(\sigma) = V(\sigma_u) \eqdef  \sigma_u^4 - \alpha\sigma_u^2\;,\qquad
H_{\{u,v\}}(\sigma) = \f12 (\sigma_u - \sigma_v)^2\;,
\end{equ} 
again provided that $u$ and $v$ are nearest-neighbours, and with $c$ an additional parameter.
While this appears to be very different from the Ising model at first sight, we can see that it is
actually a generalisation of it: if the constant $\alpha$ is large, then the potential $V$ has two very deep 
wells with minima located at $\pm \sqrt \alpha$, so its effect is to impose that $\sigma_u \approx \pm \sqrt \alpha$
with high probability. The main contribution then comes from the cross-term of the square in the 
two-body term which is the same as for the Ising model. These kind of considerations lead one to 
expect that, since these models exhibit long-range correlations at the critical temperature
(in the sense that the correlation $\E \sigma_x\sigma_y$ decays slowly in $|x-y|$ as already 
pointed out earlier) which should furthermore lead to some form of self-averaging, the Ising model
and the $\Phi^4$ model exhibit the same behaviour at criticality.

\subsection{A general picture}

The general picture that should by now be emerging from our discussion can be summarised as follows:
\begin{enumerate}
\item Many of the simplest local equilibrium systems do exhibit a phase transition, namely there
exists a critical value $\beta_c$ at which the qualitative large scale behaviour of the system
changes abruptly. In general, a system may depend on additional parameters in which case
one may see a more complicated \textit{phase diagram} with several regions in
parameter space where the global behaviour of the system displays qualitatively different
behaviour. In any case, the ``high temperature / small $\beta$ phase'' is expected to behave
in such a way that what happens in well separated regions of space is very close to independent.

\item In dimension $2$, many of these systems appear to  exhibit a form of conformal invariance 
at criticality, even though no rotation symmetry is built a priori into their description. When this
happens, the link to $2d$ conformal field theories (and the associated probabilistic objects like
SLE \cite{SLE}, QLE \cite{QLE}, etc) provides a hugely powerful machinery to predict \dash and 
in a number of cases also 
rigorously prove \dash their behaviour.

\item The universe of local statistical mechanics models can be subdivided into broad classes of models
that exhibit a shared large-scale behaviour at criticality. These are called ``universality classes''
and, in the $2d$ equilibrium case, they are expected to come in families parametrised by a 
real parameter, the central charge. (For certain values of the central charge, one expects to have 
several ``subclasses'', but we will not discuss this kind of subtlety here.)

\item Although one still expects conformal invariance at criticality in higher dimensions, this is a much smaller
symmetry there and therefore appears to provide somewhat less insight\footnote{See however the recent 
breakthrough made in the approximation of the critical exponents of the $3d$ Ising model using
the ``conformal bootstrap'' \cite{Bootstrap1,Bootstrap2} already mentioned above.}. One also expects the situation there to
be more rigid than in two dimensions, with fewer universality classes. (Possibly only a discrete family.)

\item Models that have ``obvious'' variants in every dimension typically have a critical dimension
above which their behaviour at criticality is ``trivial'' in the sense that it exhibits Gaussian
behaviour. (Typically with correlation function given by the Green's function of the Laplacian.)
In the case of the Ising universality class, this critical dimension is $4$, while in the case of
Bernoulli percolation it is $6$. 
\end{enumerate}

One important branch of modern probability theory aims to put this general picture onto rigorous
mathematical footing. The remainder of this article is devoted to a short overview of some of 
Hugo Duminil-Copin's many contributions to this vast programme.
This represents of course a mere sliver of his work and completely ignores very substantial chunks 
of it. By presenting not just a long laundry list of results that he proved and conjectures that he
settled but instead an overview of the strategy of proof for a few select results, I hope to be able
to convey one of the features of Duminil-Copin's body of work, namely that he has a knack for
finding just the right way of looking at a problem that had hitherto been overlooked. In many
cases, this only provides small cracks in the problem's armour that still require tremendous technical 
skill to be wedged open, but in some cases it results in surprisingly simple but ingenious proofs.
Either way, I am very much looking forward to learning more from Duminil-Copin's insights for many
years to come.

\section{(Dis)continuity of phase transitions}
\label{sec:Disc}

One very natural question in this area is whether one can take the limit $N \to \infty$ in 
\eqref{e:Gibbs}. At this stage, we note that the definition of $H^{(N)}$ given in
\eqref{e:generalH} is not necessarily the most natural one since it restricts the sum over those clusters
$A$ that are constrained to \textit{entirely} lie in $S_N$. Another possibility 
that appears just as natural would be to restrict the sum over clusters that merely 
intersect $S_N$, but to specify some fixed ``boundary condition'' $\bar \sigma \in S^\Lambda$ that 
is used to compute the values of the $H_A$ with $A$ intersecting both 
$\Lambda_N$ and $\Lambda \setminus \Lambda_N$ in the sense that we interpret
$\sigma_A$ in \eqref{e:generalH} as $\sigma_{A,x} = \sigma_x$ for $x \in A \cap \Lambda_N$ and
$\sigma_{A,x} = \bar \sigma_x$ otherwise.
\begin{figure}
\begin{center}
\includegraphics[width=5cm]{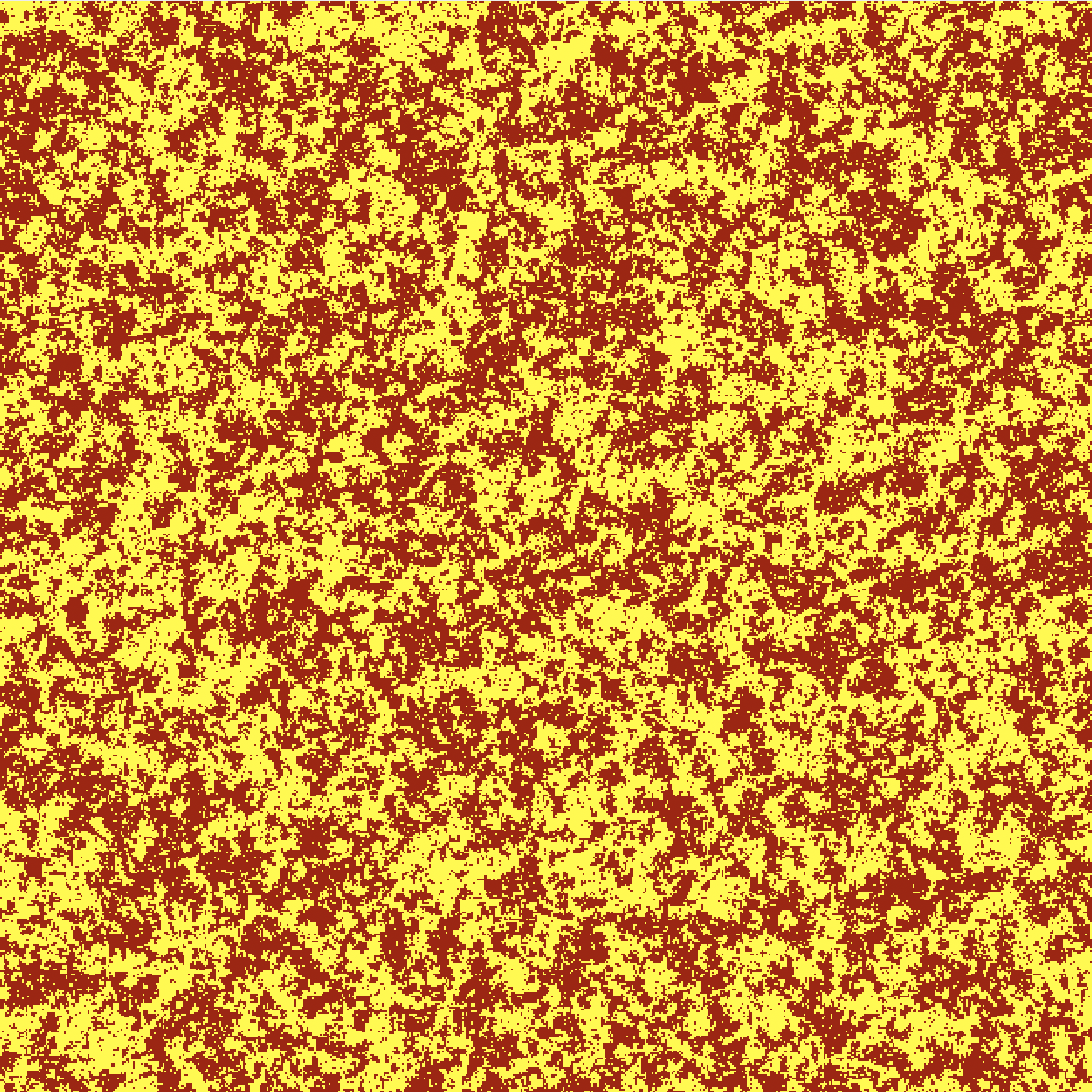}
\includegraphics[width=5cm]{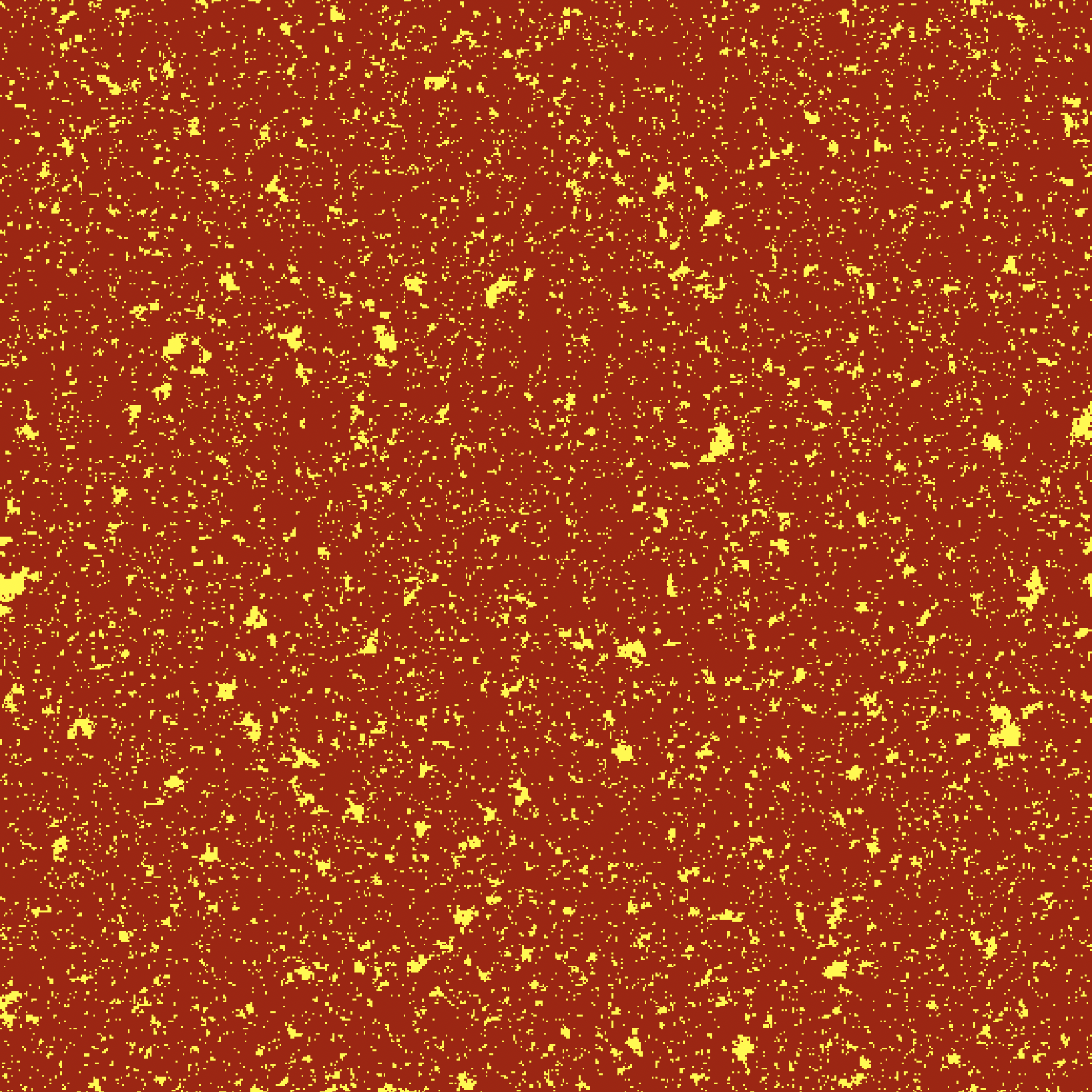}
\end{center}
\caption{Typical Ising configurations for $\beta < \beta_c$ (left) and $\beta > \beta_c$ 
(right).}\label{fig:Ising}
\end{figure}

In many examples of interest (including the case of the Ising model, but \textit{not} the case
of percolation), the measure $\mu_\beta = \lim_{N \to \infty} \mu_{\beta,N}$ is well-defined
(i.e.\ independent of the choice of boundary condition) for $\beta < \beta_c$ while one can obtain
several distinct limits in the case $\beta > \beta_c$. Figure~\ref{fig:Ising} shows typical samples
drawn from $\mu_\beta$ for the Ising model with $\bar \sigma \equiv 1$. In the case $\beta > \beta_c$, 
the resulting sample clearly ``remembers''
 the bias introduced by $\bar \sigma$ in the sense that a typical configuration consists of a ``sea'' 
 of spins taking the dominant value $+1$ (brown) with small ``islands'' of spins taking the value $-1$ (yellow). Had we set $\bar \sigma \equiv -1$, we would have obtained a sample with
 the opposite behaviour, which illustrates the non-uniqueness of the infinite-volume measure 
 $\mu_\beta$ in this case. 
In the case  $\beta < \beta_c$ on the other hand, each one of the two possible spin 
 values is about equally 
represented and the measure is symmetric under the substitution $1 \leftrightarrow -1$, which 
illustrates the uniqueness of $\mu_\beta$.
It is in fact a theorem in the case of the Ising model that for $\beta > \beta_c$ 
there exist exactly two translation invariant infinite volume
measures $\mu_\beta^\pm$ corresponding to boundary conditions $\bar \sigma \equiv \pm 1$ and
that every accumulation point of $\mu_{\beta,N}$ for any sufficiently homogeneous 
boundary condition as $N \to \infty$ is 
a convex combination of $\mu_\beta^+$ and $\mu_\beta^-$.

This raises the question of the uniqueness of $\mu_\beta$ at $\beta = \beta_c$. If it is, then we say
that the phase transition is \textit{continuous}, otherwise it is said to be \textit{discontinuous}.
The reason for this terminology is that continuity in this sense turns out to be equivalent to the 
continuity of the maps $\beta \mapsto \mu_\beta^\pm$ at $\beta = \beta_c$.
It has been known for quite some time \cite{ContTwo,ContHigh} that the phase transition 
for the Ising model is 
continuous in dimensions $d = 1,2$ as well as $d \ge 4$. The reason why dimensions $1$ and $2$ are 
typically much better understood is that the Ising model is ``solvable'' in these dimensions in the
sense that explicit expressions can be derived for the expectation of a large number of observables
under $\mu_{\beta,N}$ (this solution is straightforward in $d=1$ \cite{Ising} where no phase
transition is present, but it was a major 
breakthrough when Onsager obtained his exact solution for $d=2$ \cite{Onsager}). 
Dimension $d = 4$ on the other hand is the ``upper critical dimension'' beyond which
 the model is expected to be 
``trivial'' (i.e.\ described by Gaussian random variables in the scaling limit) 
which allows to use a number of powerful techniques, including
for example the \textit{lace expansion} \cite{Lace,LaceIsing}.

This leaves the case $d=3$ which is of course the physically most interesting one 
since the Ising model is a toy model of ferromagnetism and its dimensions represent the usual
spatial dimensions. Heuristic considerations suggest that the phase transition is also continuous
there, and this is consistent with physical experiments, assuming that the Ising model 
belongs to the same universality class as that of a genuine physical magnet. 
In the article \cite{ContIsing}, Duminil-Copin et al.\ gave the first rigorous proof that 
this is indeed the case. The proof relies on the introduction of the quantity
\begin{equ}
M(\beta) = \inf_{B \subset \Z^3} {1\over |B|^2} \sum_{x,y \in B} \int \sigma_x\sigma_y\,\mu^0_\beta(d\sigma)\;,
\end{equ}
where $\mu_\beta^0$ denotes the infinite volume limit obtained from
using ``free'' conditions, as well as three main steps. First, they rely on results of \cite{Infrared,Reflect} 
to argue that 
the Fourier transform of $x \mapsto \int \sigma_0\sigma_x\,\mu_\beta^0(d\sigma)$ belongs to
$L^1$ at $\beta = \beta_c$, which implies that $M(\beta_c) = 0$. Then, and this is the main
step, they show that 
having $M(\beta) = 0$ implies that a certain percolation model with long-range correlations
constructed from the Ising model admits no infinite clusters. Finally, they use a variant
of the ``switching lemma'' \cite{Switching} to show that the quantity 
$\int \sigma_0\sigma_x\,\mu_\beta^+(d\sigma)-\int \sigma_0\sigma_x\,\mu_\beta^0(d\sigma)$ is 
dominated by an explicit function times the probability 
of the origin belonging to an infinite cluster in the above mentioned model and therefore
has to vanish at $\beta = \beta_c$. Once this is known, it is not too difficult to show that 
the spontaneous magnetisation of the Ising model at criticality  must vanish
(namely one has $\int \sigma_0\,\mu_{\beta_c}^+(d\sigma)=0$), which in turn 
yields the desired uniqueness statement.

To illustrate the fact that continuity of the phase transition, whatever the dimension, is
a rather non-trivial property that isn't necessarily expected in general, a good example
is that of the Potts model \cite{Potts}. This is defined similarly to the Ising model,
but this time the local state space $S$ is given by $S= \{1,\ldots,q\}$ for some $q \ge 2$
endowed again with the normalised counting measure as its reference measure. As in the
Ising model, one sets  $H_A = 0$ unless $A = \{u,v\}$ with $u,v \in \Z^d$ such that 
$|u-v| = 1$, in which case one sets $H_A(\sigma) = \one_{\sigma_u = \sigma_v}$. For $q=2$
this is equivalent to the Ising model since their energy functionals only differ by a constant.
Let us also remark that there is an essentially equivalent model called the random cluster 
model (or sometimes the FK model after Fortuin and Kasteleyn who introduced it in \cite{FK}) in which one
directly considers partitions of $\Z^d$ into connected ``clusters'' 
(which one should think of as the edge-connected components of the 
sets $\{u\,:\, \sigma_u = i\}$ for $i \in S$ and a given configuration $\sigma$ of the
Potts model) and which makes sense also for non-integer values of $q \ge 1$.
(In the case $q=1$ the FK model actually reduces to regular Bernoulli percolation.)
See \eqref{e:defFK} below for a more precise definition of this model.

It was conjectured by Baxter in the 70's \cite{Baxter1,Baxter2} that the Potts model on $\Z^2$ exhibits 
a continuous phase transition if and only if $q \le 4$. The pair of
articles \cite{PottsDisc,PottsCont} by Duminil-Copin et al.\ provides proofs of both directions 
of this conjecture. For the sake of brevity we will not comment on the proofs in any detail, but
we note that the proof of continuity of the phase transition for $q \le 4$ is almost completely disjoint
from that in the case of the $3d$ Ising model. A milestone is again to show that the
model at criticality with boundary condition set to one fixed element of $S$ 
admits no infinite cluster. However both the
proof of this fact (exploiting a form of discrete holomorphicity of certain cleverly chosen
observables) and the proof of its equivalence with the uniqueness of the infinite-volume measure 
at criticality (actually they show equivalence of a list of $5$ quite distinct properties which are
of independent interest for the study of the critical Potts model) are completely different.

Regarding the proof of \textit{dis}continuity when $q > 4$, the main tool is a close relation,
first discovered by Temperley--Lieb \cite{TL} in a restricted context and 
then by Baxter et al.\ \cite{Baxter_1976}
in more generality, between the FK model on $\Z^2$ and the so-called six-vertex model.
Configurations of the latter can be visualised as jigsaws where one assigns to each vertex of $\Z^2$ 
(or a subset thereof) one of the six (oriented) tiles
\begin{center}
	\tile[lightgray]{1}{-1}{-1}{1} \qquad
	\tile[lightgray]{-1}{1}{1}{-1} \qquad
	\tile[lightgray]{1}{1}{-1}{-1} \qquad
	\tile[lightgray]{-1}{-1}{1}{1} \qquad 
	\tile[violet]{1}{-1}{1}{-1} \qquad
	\tile[violet]{-1}{1}{-1}{1}
\end{center}
and one enforces the admissibility constraint that the tiles fit together seamlessly. One further postulates
that the probability of seeing a given admissible configuration is proportional to $c^{\# p}$,
where $\# p$ denotes the number of purple tiles in the configuration and $c$ is some fixed constant.
The relation between the six-vertex model and the critical FK model holds for the
specific choice $c = \sqrt{2+\sqrt q}$. The advantage gained from this relation is that the 
six-vertex model is ``solvable'' in a certain sense using the transfer matrix formalism.
This doesn't get one out of the woods since the transfer matrices $V_N$ involved are very large: they
act on a vector space of dimension $2^N$, but are block diagonal with each block
$V_N^{[n]}$ acting on a subspace of dimension $\binom{n}{N}$. Each of these blocks 
is irreducible with positive entries and therefore admits a Perron--Frobenius vector. 
The main technical result of \cite{PottsDisc} is a very sharp asymptotic for the 
Perron--Frobenius eigenvalues of $V_N^{[N/2-r]}$ for fixed $r$ as $N \to \infty$.
Interestingly, the authors are able to prove that the ratios between 
these values converge to finite (and explicit, at least as explicit convergent series) 
limits as $N \to \infty$ and that the values themselves diverge exponentially in $N$
with known exponent, but the common lower-order behaviour of that divergence is not known.
This asymptotic is however sufficient to obtain good control over the partition function of the 
six vertex model and to exploit it to compute an explicit expression for the inverse 
correlation length of the critical Potts model with free boundary conditions when $q > 4$. 
The finiteness of that expression finally allows to deduce the discontinuity of the phase
transition.

To conclude this section, I would like to mention the beautiful article \cite{Sharp} which, 
although not quite
dealing with the question of continuity of the phase transition, does have a related flavour.
The question there is that of the ``sharpness'' of the phase transition which in this particular case
is couched as the question whether it is really true that the measure $\mu_\beta$ has exponentially
decaying correlations (in the sense that the covariance between $f(\sigma_0)$ and $f(\sigma_x)$
decays exponentially fast as $|x| \to \infty$ for any ``nice enough'' function $f \colon S \to \R$)
for \textit{every} $\beta < \beta_c$ and not just for small enough values where a perturbation
argument around $\beta = 0$ (where $f(\sigma_0)$ and $f(\sigma_x)$ are independent under $\mu_0$ as soon as
$x \neq 0$) may apply. One difficulty with this type of statements is that one will
in general not know any closed-form expression for $\beta_c$: in the case of the FK model on the 
square lattice such an expression can be derived by a duality argument \cite{MR2948685}, 
but it is not known for more general situations. The main result of 
\cite{Sharp} is that the phase transition of the FK model on \textit{any} vertex-transitive infinite graph
is sharp.

The main tool in their proof is a novel and far-reaching generalisation of the OSSS inequality 
\cite{OSSS}. The context here is that of increasing random variables $f \colon \{0,1\}^E \to [0,1]$ 
(for a finite set $E$ and for the natural coordinate-wise partial order on $\{0,1\}^E$) 
where $\{0,1\}^E$ is furthermore equipped with a probability measure $\P$ that is itself
\textit{monotonic} in the sense that for every $F \subset E$ and every $e \in E \setminus F$, 
the conditional probabilities $\P(w_e = 1\,|\, \CF_F)$
are increasing functions. (Here $\CF_F$ denotes the $\sigma$-algebra generated by the evaluations
$w \mapsto w_e$ for $e \in F$.) One then considers \textit{any} algorithm that reveals one by 
one the values of an input 
$w \in \{0,1\}^E$ in such a way that the coordinate to be revealed next depends in a deterministic
way on the information gleaned from the revealement up to that point. (In particular, the first 
coordinate to be revealed is always the same since no information has been obtained yet at that point.) 
The algorithm stops once
the revealed values are sufficient to determine the value of $f(w)$, thus yielding a random
set $\hat E \subset E$ of revealed values. The result of \cite{Sharp} is then that one has the inequality
\begin{equ}[e:CorInequ]
\Var(f) \le \sum_{e \in E} \P(e \in \hat E)\, \Cov(f,w_e)\;,
\end{equ}
which looks formally the same as the result of \cite{OSSS}, but the assumption there was that
the measure $\P$ is simply the uniform measure. Since the latter is clearly monotonic (it is such 
that $\P(w_e = 1\,|\, \CF_F)$ is constant), the results of \cite{OSSS} follow as a special case.

Using this result, \cite{Sharp} then obtain the following dichotomy which yields the desired sharpness 
statement.

\begin{theorem}\label{theo:sharp}
Let $G$ be any transitive graph and let $\P_{\beta,n}$ be the FK measure on the ball $\Lambda_n$ of radius
$n$ in $G$. Then, there exists $\beta_c \in \R$ such that, for every $\beta < \beta_c$ 
there exists $c_\beta > 0$ such that $\P_{\beta,n}(0 \leftrightarrow \d \Lambda_n) \lesssim e^{-c_\beta n}$, uniformly in $n$. For $\beta > \beta_c$ on the other hand, there exists $c > 0$ such that 
$\P_{\beta,n}(0 \leftrightarrow \d \Lambda_n) \ge c \min\{1,\beta - \beta_c\}$.
\end{theorem}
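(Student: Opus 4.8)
The plan is to deduce Theorem~\ref{theo:sharp} from the generalised OSSS inequality \eqref{e:CorInequ} by turning it into a differential inequality for $\theta_n(\beta)$, the probability that the origin is connected to the sphere $S_n$ of radius $n$. (It is cleanest to take $\theta_n(\beta)$ in the infinite-volume random-cluster measure with free boundary conditions, so that it is decreasing in $n$ and increasing in $\beta$; the minor discrepancy with the finite-volume quantity $\P_{\beta,n}(0\leftrightarrow\d\Lambda_n)$ of the statement is absorbed by routine stochastic-domination comparisons.) Applying \eqref{e:CorInequ} — to the increasing indicator $f_n$ of $\{0\leftrightarrow S_n\}$ and to the random-cluster measure, which is monotonic in the required sense because it satisfies the FKG lattice condition for $q\ge 1$ — gives, for any decision tree with revealed set $\hat E$,
\begin{equ}
\theta_n(\beta)\bigl(1-\theta_n(\beta)\bigr) = \Var(f_n) \le \Bigl(\max_e\P(e\in\hat E)\Bigr)\sum_e\Cov(f_n,w_e)\;,
\end{equ}
the maximum being pulled out because $f_n$ is increasing and the measure is positively associated, so all covariances are non-negative. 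On the other hand, a Russo-type differential formula for the random-cluster model gives $\theta_n'(\beta)\ge c\sum_e\Cov(f_n,w_e)$ with $c>0$ uniform on compact $\beta$-intervals, whence $\theta_n'(\beta)\ge c\,\theta_n(\beta)(1-\theta_n(\beta))/\max_e\P(e\in\hat E)$.

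Everything now hinges on exhibiting a decision tree with small maximal revealment, and this is the step I expect to be the main obstacle. The idea is to randomise the \emph{scale} of the exploration: pick $\ell$ uniformly in $\{0,1,\ldots,n\}$ and let the tree reveal the entire cluster of the sphere $S_\ell$, querying the edges incident to the growing cluster in a fixed deterministic order. This tree determines $f_n$, since $\{0\leftrightarrow S_n\}$ holds iff the cluster of $S_\ell$ contains both the origin and a vertex of $S_n$ (any open path realising the connection must cross $S_\ell$). For a fixed edge $e$ with an endpoint at distance $d$ from the origin, $e$ can be revealed only if that endpoint is connected to $S_\ell$, so by transitivity and stochastic domination $\P(e\in\hat E_\ell)\lesssim\theta_{|d-\ell|}(\beta)$; averaging over $\ell$ gives $\max_e\P(e\in\hat E)\lesssim\frac1n\sum_{k=0}^{n}\theta_k(\beta)\eqdef\frac1n\Sigma_n(\beta)$. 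The gain is precisely in this averaging: the naive trees that explore the cluster of the origin, or of $S_n$, have revealment of order one near their anchor, whereas spreading the exploration over all scales $\ell$ buys the decisive factor $n$. Substituting, one obtains the differential inequality
\begin{equ}[e:ODEsharp]
\theta_n'(\beta) \;\gtrsim\; \frac{n}{\Sigma_n(\beta)}\,\theta_n(\beta)\bigl(1-\theta_n(\beta)\bigr)\;,\qquad \Sigma_n(\beta)\eqdef\sum_{k=0}^{n-1}\theta_k(\beta)\;.
\end{equ}

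From here the dichotomy follows by integrating \eqref{e:ODEsharp}, as in the work of Duminil-Copin and Tassion. Put $\beta_c\eqdef\sup\{\beta:\sum_k\theta_k(\beta)<\infty\}$ and note that $\Sigma_n$ is increasing in $n$ and in $\beta$. If $\beta<\beta_c$, choose $\beta<\beta'<\beta_c$; then $S\eqdef\sup_n\Sigma_n(\beta')<\infty$ and $\theta_n(\beta')<\frac12$ for $n$ large, so on $[\beta,\beta']$ the bound $\Sigma_n\le S$ turns \eqref{e:ODEsharp} into $\theta_n'/\theta_n\gtrsim n/S$, and integrating from $\beta$ to $\beta'$ gives $\theta_n(\beta)\le e^{-c_\beta n}$ with $c_\beta>0$. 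If $\beta>\beta_c$, a short (two-step) bootstrap of \eqref{e:ODEsharp} shows that $\theta(\beta)\eqdef\lim_n\theta_n(\beta)>0$: were $\theta_n(\beta'')\to0$ for some $\beta''\in(\beta_c,\beta)$, then $\Sigma_n(\beta'')=o(n)$ by Cesàro, so \eqref{e:ODEsharp} would first force $\theta_n(\beta'')$ to decay faster than any power of $n$ and, after one further iteration, stretched-exponentially, making $\sum_k\theta_k(\beta'')<\infty$ and contradicting $\beta''>\beta_c$. Once $\theta(\beta)>0$ is known, $\Sigma_n(\beta)/n\to\theta(\beta)$, so letting $n\to\infty$ in \eqref{e:ODEsharp} yields $\theta'(\beta)\gtrsim 1-\theta(\beta)$; integrating this from $\beta_c$ to $\beta$ gives $\theta(\beta)\ge 1-e^{-c(\beta-\beta_c)}\gtrsim\min\{1,\beta-\beta_c\}$, and $\theta_n(\beta)\ge\theta(\beta)$ then yields the claimed lower bound. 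It only remains to verify that this $\beta_c$ is the threshold appearing in the statement.
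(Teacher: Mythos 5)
The first half of your argument is essentially the paper's: the generalised OSSS inequality \eqref{e:CorInequ} applied to $\one_{0\leftrightarrow\d\Lambda_n}$ together with the scale-randomised exploration of the cluster of the sphere $S_\ell$, revealment bounded by $\theta_{|d-\ell|}$, and a Russo-type lower bound on $\theta_n'$, yielding the differential inequality \eqref{e:ineq}. Two small remarks there: pulling out $\max_e \P(e\in\hat E)$ is legitimate only because FKG makes every $\Cov(f_n,w_e)$ non-negative (you say this), and since \eqref{e:CorInequ} is stated for deterministic algorithms you should average the inequality over the trees $T_\ell$, $\ell=0,\dots,n$, rather than invoke a randomised tree \dash cosmetic points.

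The genuine gap is in the deduction of the dichotomy. With your definition $\beta_c=\sup\{\beta:\sum_k\theta_k(\beta)<\infty\}$ the subcritical half is fine (below $\beta_c$ one has $\sup_n\Sigma_n<\infty$ and a single integration gives exponential decay), but the supercritical half does not close. First, integrating the inequality in $\beta$ only produces decay at parameters strictly \emph{below} $\beta''$, not at $\beta''$ itself; that is fixable, since summability at any $\beta_1\in(\beta_c,\beta'')$ already contradicts your definition. The real problem is quantitative: $\Sigma_n(\beta'')=o(n)$ only gives $\theta_n(\beta_1)\le\exp\bigl(-c(\beta''-\beta_1)\,n/\Sigma_n(\beta'')\bigr)$, and $o(n)$ can be as large as, say, $n/\log\log n$, in which case this bound is merely a power of $\log n$ \dash not ``faster than any power of $n$'' \dash and after the next iteration one is in essentially the same situation. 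The number of iterations needed to reach summability depends on the unknown rate at which $\theta_n(\beta'')\to 0$ and is not bounded (for decay slower than every iterated logarithm, no fixed number of steps suffices), so the claimed ``two-step bootstrap'' fails, and with your $\beta_c$ you cannot rule out an intermediate regime where $\theta\equiv 0$ on an interval above $\beta_c$ while $\sum_k\theta_k=\infty$. Ruling out exactly this regime is the content of the Duminil-Copin\dash Raoufi\dash Tassion dichotomy lemma that the paper invokes after \eqref{e:ineq}: one defines the threshold by $\limsup_n \log\Sigma_n(\beta)/\log n<1$, so that below it $\Sigma_n\le n^{1-\delta}$ and the two-step bootstrap (stretched exponential, then exponential) genuinely works, while above it the bound $\theta(\beta)\gtrsim\min\{1,\beta-\beta_c\}$ is proved not by bootstrapping but by summing $\theta_k'/k\ \ge\ \theta_k/\Sigma_k\ \ge\ \log\Sigma_{k+1}-\log\Sigma_k$ and integrating the bounded quantity $\f1{\log n}\sum_{k\le n}\theta_k/k$ in $\beta$, which yields positivity and the linear lower bound simultaneously. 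That extra idea (or a substitute for it) is what is missing. Finally, the passage from \eqref{e:ineq} to $\theta'\ge c(1-\theta)$ should be done in integrated form (e.g.\ via Fatou), since convergence of $\theta_n$ gives no control on $\theta_n'$; that step is routine once positivity of $\theta$ is available.
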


Once \eqref{e:CorInequ} is known, the proof is surprisingly simple and relies on two ingredients. First, one can show that the measures $\P_{\beta,n}$
and the function $\one_{0 \leftrightarrow \d\Lambda_n}$ satisfy the assumptions of \eqref{e:CorInequ}. 
Setting $\theta_n(\beta) = \P_{\beta,n}(0 \leftrightarrow \d \Lambda_n)$, a clever choice of search 
algorithm for the (potential) cluster connecting the origin $0$ to $\d \Lambda_n$
then allows to show that one has the bound
\begin{equ}[e:ineq]
\theta_n'(\beta) \gtrsim \sum_{e \in E} \Cov_\beta(\one_{0 \leftrightarrow \d\Lambda_n} ,w_e) \ge {n \over 8\Sigma_n(\beta)}\theta_n(\beta) (1-\theta_n(\beta))\;. 
\end{equ}
where $\Sigma_n = \sum_{k=0}^{n-1} \theta_n$. The fact that the first inequality holds is known and 
can be checked in an elementary way. The second fact is that \textit{any} sequence of functions $\beta \mapsto \theta_n(\beta)$
satisfying a differential inequality of the form \eqref{e:ineq} necessarily satisfies a dichotomy of the type
appearing in the statement of Theorem~\ref{theo:sharp}. Since we are not interested in the regime where $\theta_n$ 
is large, we can rewrite \eqref{e:ineq} as $\theta_n' \ge {cn \over\Sigma_n} \theta_n$.
The fact that the $\theta_n$ then should satisfy such a dichotomy is quite clear: if $\beta$ is such that 
they converge to a non-vanishing limit $\theta$, then $\Sigma_n /n \sim \theta$ and one must have $\theta' \ge c$.
If on the other hand they converge to $0$ on a whole interval $[a,b]$, then that convergence must take place sufficiently 
fast so that $\Sigma_n /n \gg \theta_n$ (since otherwise the previous argument applies).
Since $\Sigma_n / n \sim \theta_n$ for $\theta_n \sim n^{-\alpha}$ as soon as $\alpha < 1$, it 
is then plausible that for any $c < b$ one has 
$\theta_n \ll n^{-1/2}$ (say), implying $\theta_n' \gtrsim \sqrt n \theta_n$ and therefore $\theta_n 
\lesssim e^{-\sqrt n (c-\beta)}$ for $\beta < c$. This shows that $\Sigma_n$ is bounded for $\beta < c$, leading to $\theta_n' \gtrsim  n \theta_n$ and therefore an exponentially (in $n$) small bound as claimed.

\section{Triviality of \TitleEquation{\Phi^4_4}{Phi44}}

It has been known since the groundbreaking work of Osterwalder and Schrader \cite{OS, OS2} that,
at least in some cases, the construction of a (bosonic) quantum field theory satisfying the 
Wightman axioms is equivalent to the construction of a probability measure on the 
space of distributions 
satisfying a number of natural properties. One of the pinnacles of that line of enquiry
was the construction in the seventies of the $\Phi_2^4$ and $\Phi_3^4$ measures
\cite{Nelson1,Nelson2,Simon,MR0231601,MR0359612,MR0408581,MR0384003,MR0416337},
which corresponds to the simplest case of an interacting theory in two or three space-time dimensions
with one type of boson. 

At a heuristic level, the $\Phi_d^4$ measure is the measure $\mu^{(d)}$ on the space of 
Schwartz distributions $\CS'(\R^d)$ (or on the $d$-dimensional torus) given by 
\begin{equ}
\mu^{(d)}(d\Phi) = Z^{-1} \exp \Bigl(- \f12 \int \bigl(|\nabla \Phi(x)|^2 - C \Phi^2(x) + \Phi^4(x)\bigr)\,dx\Bigr)\, d\Phi\;,
\end{equ}
where ``$d\Phi$'' denotes the infinite-dimensional Lebesgue measure on
$\CS'(\R^d)$. This expression is of course problematic at many levels: 
infinite-dimensional Lebesgue measure does not exist, distributions cannot be squared, etc.
If it were only for the term $|\nabla\Phi|^2$, one 
could reasonably interpret this expression as the Gaussian measure $\mu_0$ with covariance operator 
given by the Green's function of the Laplacian, which is a well-defined probability measure
(modulo technicalities arising from the constant mode which can easily be fixed). The measure $\mu_0$
is called the Gaussian Free Field (GFF) since it corresponds to a quantum field theory 
in which particles are free, i.e.\ do not interact with each other at all.

This suggests that a more refined interpretation of the $\Phi^4_d$ measure could be given by 
\begin{equ}[e:defNaive]
\mu^{(d)}(d\Phi) = Z^{-1} \exp \Bigl(- \f12 \int \Phi^4(x)\,dx \Bigr)\,\mu_0(d\Phi)\;.
\end{equ}
This is still ill-defined since the GFF is supported on distributions rather than
functions for any dimension $d \ge 2$. However, setting $\Phi_\eps = \rho_\eps \star \Phi$, the \textit{Wick power}
\begin{equ}[e:Wick]
\Wick{\Phi^4} = \lim_{\eps \to 0} \big(\Phi_\eps^4 - 3 \Phi_\eps^2 \E \Phi_\eps^2\big)\;,
\end{equ}
turns out to be a well-defined random Schwartz distribution  
(i.e.\ the limit exists and is independent of the choice of $\rho_\eps$) 
in dimensions $d < 4$. In dimension $2$, Nelson showed in \cite{Nelson1} that the Radon--Nikodym factor
appearing in \eqref{e:defNaive} with $\Phi^4$ replaced by $\Wick{\Phi^4}$ yields an integrable
random variable, thus leading to a definition of $\mu^{(2)}$. In particular, the $\Phi^4_2$ measure
is equivalent to the GFF.
In dimension $3$, this turns out not to be the case, but it is still possible to show that the measure
\begin{equ}[e:defCorrect]
\mu^{(3)}(d\Phi) = \lim_{\eps \to 0} Z_\eps^{-1} \exp \Bigl(- \f12 \int \Phi_\eps^4(x) - C_\eps \Phi_\eps^2(x)\,dx \Bigr)\,\mu_0(d\Phi)\;,
\end{equ}
is well-defined for a suitable choice of the constant $C_\eps$ which differs from the choice
$3\E \Phi_\eps^2 \sim \eps^{-1}$ suggested by \eqref{e:Wick} by a logarithmically divergent term. (An alternative construction 
of this measure was recently obtained by completely different techniques in 
\cite{Hai14,HM18,MW18a}.)

This discussion begs the question of what happens for $d \ge 4$ and especially when $d=4$ which
is the physically most interesting case from the QFT perspective (remember that dimension here
corresponds to space-time). Regarding the case $d > 4$, it was already shown in the eighties
by Aizenman and Fröhlich \cite{Triv1,Triv2,Triv3} that pretty much any ``reasonable'' 
definition of the $\Phi^4_d$ measure actually coincides with the GFF. This still left the case
$d=4$ which has always been expected to be the hard case since it is ``critical'' in the
sense that, at least at a formal level, the terms $\Phi^4$ and $|\nabla \Phi|^2$ scale in the same
way in the following sense. Writing $\CS_\lambda$ for the transformation
$
(\CS_\lambda F)(x) = F(\lambda x)
$,
the GFF has the self-similarity property $\CS_\lambda \Psi \eqlaw \lambda^{2-d\over 2} \Psi$
for $\Psi$ drawn from $\mu_0$. Pretending that $\Psi$ behaves like a function (even though it
really is a random distribution), we deduce that
\begin{equ}
\CS_\lambda |\nabla \Psi|^2 =  \lambda^{-2} |\nabla \CS_\lambda \Psi|^2
\eqlaw \lambda^{-d} |\nabla \Psi|^2\;,\qquad
\CS_\lambda (\Psi^4) = (\CS_\lambda \Psi)^4 \eqlaw \lambda^{4-2d} \Psi^4\;.
\end{equ}
These exponents are indeed equal if and only if $d=4$. A heuristic calculation actually
suggests that, at higher order, the term $|\nabla \Psi|^2$ dominates the term $\Psi^4$ at
large scales. Variants of this observation have been made rigorous in a number of works
\cite{MR771258,MR892925,MR882810}, including most recently in an impressive series of works by 
Bauerschmidt--Brydges--Slade (see \cite{BBS2,BBS1} and the references therein). 

One way of formulating one of their main results is the framework given in our introduction with 
$S = \R$, $\mu$ being Lebesgue measure, $H_{\{x\}}(\phi) = \f g4 \phi_x^4 + \f\nu2 \phi_x^2$,
$H_{\{x,y\}}(\phi) = |\phi_x - \phi_y|^2$ when $x$ and $y$ are neighbouring lattice sites
in $\Z^4$, and $H_A = 0$ otherwise. This model behaves in a way that is very similar to the Ising
model, to which it degenerates in the regime $g \to \infty$ and $\nu = -g$. Traditionally,
one considers the $\Phi^4$ model with $\beta = 1$, since one can always reduce oneself to this case
by adjusting $g$ and $\nu$, and possibly rescaling the $\phi_x$'s by a factor. 
One typically also considers $g$ fixed,
it is therefore the parameter $\nu$ that is tuneable and plays the role of a ``temperature'' in 
this model. Just like the Ising model, it exhibits a phase transition at some value $\nu_c \in \R$: 
for $\nu > \nu_c$, there 
exists a unique infinite volume measure which is symmetric under $\phi \mapsto -\phi$. For
$\nu < \nu_c$ on the other hand, one finds two distinct infinite-volume measures (as well as their
convex combinations) depending on the boundary conditions one chooses.

A state $\phi \in S^{\Lambda_N}$ with $\Lambda_N = \{-N,
\ldots,N\}^4$ is viewed as a distribution $\iota \phi$ on the torus (of size $2$) by setting, for
every smooth test function $f \colon \T^4 \to \R$,
\begin{equ}
\bigl(\iota \phi\bigr)(f) = \sum_{x\in \Lambda_N} \sigma_N \phi_x f(x/N)\;,
\end{equ}
for a sequence of values $\sigma_N$ chosen in such a way that 
$\E \bigl((\iota \phi)(1)^2\bigr) = 1$. It is then shown in \cite{BBS2} that if $g$ is sufficiently small
and $\nu$ is chosen in a suitable way (close but not quite equal to the critical value $\nu_c$),
then $\iota \phi$ converges to a massive GFF, namely the Gaussian field with
covariance given by $(m^2 - \Delta)^{-1}$ for some $m \in \R$ (which depends on the 
specific way in which $\nu$ is being tuned to approach $\nu_c$ as $N \to \infty$).

While this result strongly suggests that there exists no non-trivial $\Phi^4_4$ measure, it doesn't
rule out the possibility of having a non-trivial scaling limit for the discrete field we
just described at (or near) criticality when the constant $g$ is sufficiently large
(in other words ``at strong coupling''). The technique of proof of \cite{BBS2}
was to implement a rigorous version of the ``renormalisation group technique'', which relies on
a subtle analysis of the behaviour of the renormalisation map near the fixed point given by
the GFF. This is unfortunately perturbative in nature and so has little hope of being
able to deal with arbitrary $g$.
In the recent work \cite{Phi44} however, Aizenman and Duminil-Copin finally
succeeded in showing the following result.

\begin{theorem}\label{theo:Gauss}
For \textit{every} way of adjusting $g = g_N$ and $\nu = \nu_N$ as $N \to \infty$ such that 
$\nu_N \ge \nu_{c,N}$, 
every $M_N \to \infty$ with
$1 \ll M_N \ll N$ and every smooth compactly supported test function $f$, the law of 
$\xi_N^f = \sum_{x\in \Lambda_N} \phi_x f(x/M_N)$, normalised so that its variance is one, 
converges to a normal distribution. 
\end{theorem}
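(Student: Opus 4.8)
The plan is to turn Gaussianity of the normalised variables $\xi_N^f$ into the statement that their rescaled connected correlation functions vanish, to bound those by tree diagrams in the spirit of Aizenman and Fröhlich, and finally to extract the extra decay — which the naive bound just fails to give because $d=4$ is the critical dimension — from a sharp estimate on intersections of random currents. Concretely, by the method of moments it suffices to show that, for each fixed smooth compactly supported $f$, and then jointly for finitely many of them, every cumulant of $\xi_N^f$ of order $\ge 3$, divided by the appropriate power of $\Var(\xi_N^f)$, tends to $0$. Since $\nu_N \ge \nu_{c,N}$ keeps us in the $\phi \mapsto -\phi$ symmetric regime, the odd cumulants vanish identically, and by the Lebowitz inequality together with the Aizenman–Fröhlich tree-graph bounds every connected $2m$-point function is dominated by sums of products of the two-point function $G(x,y) = \langle \phi_x \phi_y\rangle$ and of the connected four-point (Ursell) function $U_4$. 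Everything therefore comes down to controlling
\begin{equ}
{\kappa_4(\xi_N^f) \over \Var(\xi_N^f)^2}\;,\qquad
\kappa_4(\xi_N^f) = \sum_{x_1,\dots,x_4 \in \Lambda_N} f(x_1/M_N)\cdots f(x_4/M_N)\,U_4(x_1,\dots,x_4)\;,
\end{equ}
which is essentially the renormalised coupling constant of the model seen at scale $M_N$; the goal is to show it tends to $0$. (Since $M_N \ll N$, boundary effects are harmless and one may compare with infinite volume using monotonicity in $\nu$.)

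Next I would bring in the random-walk structure, either via the Brydges–Fröhlich–Spencer representation or, equivalently, by approximating $\Phi^4$ by Ising models à la Griffiths–Simon so that the random-current representation and the switching lemma apply. These yield $U_4 \le 0$ and Aizenman's tree-diagram bound $0 \le -U_4(x_1,\dots,x_4) \le 2\sum_{y}\prod_{i=1}^4 G(x_i,y)$, while reflection positivity gives two-sided control of $G$, in particular the infrared bound $G(x,y) \le C|x-y|^{-(d-2)}$, valid at and above criticality. In dimension $d\ge 5$ this already finishes the argument: the bubble $\sum_z G(z)^2$ converges and a scaling count gives $\kappa_4/\Var^2 = O(M_N^{4-d}) \to 0$. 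In dimension $d=4$, however, the bubble diverges like $\log M_N$, so these classical bounds only show that $\kappa_4(\xi_N^f)/\Var(\xi_N^f)^2$ stays \emph{bounded}.

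The crux — and the step I expect to be by far the hardest — is to upgrade this boundedness to genuine decay. The idea is to replace the single common vertex $y$ in the tree bound by the geometry of \emph{two} independent random currents (equivalently, the two independent random-walk ensembles thrown off by the BFS expansion) carrying the four sources, so that $-U_4$ is controlled by the probability that these two ensembles have a \emph{long} overlap. In $d=4$ two independent walks run over the relevant diffusive time $\sim L^2$ intersect only $O(\log L)$ times in expectation, and, crucially, a repeated intersection is again only logarithmically costly; feeding this through the dyadic scales $1,2,\dots,M_N$ produces a gain of a factor $\sim 1/\log$ that beats the $\log M_N$ coming from the bubble, yielding $\kappa_4(\xi_N^f)/\Var(\xi_N^f)^2 \lesssim 1/\log M_N \to 0$. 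The whole difficulty is to prove this intersection–clustering estimate \emph{uniformly in the coupling $g_N$ and uniformly throughout $\nu_N \ge \nu_{c,N}$}, so that no weak-coupling perturbation theory enters; this is exactly where one must exploit the combinatorial structure of random currents together with the two-sided control of $G$ from reflection positivity.

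Finally I would assemble the pieces. Plugging the vanishing of the rescaled four-point function back into the tree-graph inequalities propagates the decay to all higher rescaled connected functions, and to the mixed cumulants needed for joint convergence. One then pushes the estimates through the Griffiths–Simon approximation to obtain them for the genuine lattice $\Phi^4_4$ model, and checks that the uniformity obtained above covers \emph{every} admissible tuning $g=g_N$, $\nu=\nu_N$ with $\nu_N \ge \nu_{c,N}$: when $\nu_N$ stays bounded away from $\nu_{c,N}$ the correlation length is finite and asymptotic normality of $\xi_N^f$ follows from a standard central limit theorem for linear statistics of a field with exponentially decaying truncated correlations, while in the near-critical regime it follows from the triviality bound just described. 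The method of moments, the variance being normalised to one, then gives that $\xi_N^f$ — and jointly any finite collection of such variables — converges to a Gaussian limit.
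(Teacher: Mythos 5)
Your plan follows essentially the same route as the Aizenman--Duminil-Copin proof sketched in the paper: a Griffiths--Simon block-spin approximation to bring lattice $\Phi^4_4$ into the random-current/switching framework, reduction of Gaussianity to the vanishing of rescaled fourth cumulants via the classical tree-graph bound (which in $d=4$ just fails, giving only boundedness because the bubble is marginal), and then the key step of improving the tree bound by a divergent factor obtained from a multiscale analysis of intersections of the random-current paths, uniformly in the coupling. The only discrepancy is quantitative and does not change the approach: the gain actually proved is a (possibly very small) power $c$ of the bubble sum $\sum_{|x|\le M}C(0,x)^2$ --- so at best of order $(\log M_N)^c$ rather than a full $1/\log M_N$ --- and neither the sharpness of the infrared bound nor the logarithmic divergence of the bubble is known or used in the argument.
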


\begin{remark}
The condition $\nu_N \ge \nu_{c,N}$ can actually be slightly relaxed but not too much. This is because,
in the ``low temperature'' regime and with free (or periodic) boundary conditions, one would expect 
the law of $\xi_N^f$ to converge to a Bernoulli random variable rather than a Gaussian.
\end{remark}

At a high level, the main reason why \cite{Phi44} can deal with arbitrary couplings is that 
one can think of their setting as being more akin to ``perturbing around $g=\infty$'' rather than around
$g=0$. In the setting of the introduction, they start by considering the Ising model as
described there (i.e.\ with $\mu = \f12 (\delta_1 + \delta_{-1})$, but then expand their class of models to 
allow for each site to represent a collection of spins with arbitrary ferromagnetic interactions within
a site, instead of a single spin. This has the effect of replacing $\mu$ by any measure that 
can be obtained as the law of $\delta \sum_{i=1}^K s_i$ for some $\delta > 0$ and $K \in \N$, 
and where the $s_i \in \{-1,1\}$ are random variables with a joint distribution proportional to
$\exp(-\sum_{ij} a_{ij} s_is_j)$ for some arbitrary but \textit{positive} coefficients $a_{ij}$.
As was shown
already in the 70's \cite[Thm~1]{SG}, all probability measures on $\R$ of the type $Z^{-1}\exp(c x^2 - gx^4)\,dx$
can be obtained as limits of such measures, so that the discrete $\Phi^4_4$ model can be viewed as a limit
of block-spin models.

Recall that to show that a collection $\{X_a\}_{a \in A}$ of real-valued random variables is jointly Gaussian
it suffices to show that all joint fourth cumulants $\E_c \{X_{a_1},\ldots,X_{a_4}\}$ with $a_i \in A$
vanish. It is therefore not surprising that fourth cumulants of the spin variables play an important role
in any proof of Gaussianity for Ising-type models. In dimension $d \ge 5$, the proof in \cite{Triv2}
relied on two very important facts. First, writing $C(x,y) = \E \bigl(\sigma_x \sigma_y\bigr)$ for the 
spin correlation function, one shows that for any temperature any any Ferromagnetic interaction, one has the bound
\begin{equ}[e:treeBound]
\big|\E_c \{\sigma_{x_1},\ldots,\sigma_{x_4}\}\big| \le 2\sum_{y \in \Z^d} C(x_1,y)\cdots C(x_4,y)\;.
\end{equ}
One then observes that \textit{at the critical temperature}, the function $C$ is bounded
by 
\begin{equ}[e:boundCorr]
C(x,y) \lesssim |x-y|^{2-d}\;.
\end{equ}
Consider now four smooth compactly supported test functions $f_i$ and define
\begin{equ}
X_i = \sum_{x\in \Z^d} \sigma_x f_i(x/M)\;.
\end{equ}
In particular, the sum ranges over $\CO(M^d)$ sites. 
If one assumes that \eqref{e:boundCorr} is sharp, then one expects to have $\E X_i^2 \approx M^{d+2}$,
so that the ``correct'' normalisation for the $X_i$'s to have unit variance is expected to be 
$\xi_i = M^{-\f{d+2}2}X_i$.
On the other hand, combining the covariance bound with the bound on the fourth cumulant, a 
powercounting argument shows that 
$\E_c\{\xi_1,\ldots,\xi_4\} \lesssim M^{-2(d+2)}M^{d+8} = M^{4-d}$, which does indeed converge to $0$
as $M \to \infty$ when $d > 4$, thus showing that the $\xi_i$'s are jointly Gaussian in the limit.

Clearly this calculation does not allow us to conclude anything when $d = 4$. The main contribution of
\cite{Phi44} is to show that \eqref{e:treeBound} can actually be improved to a bound of the type
\begin{equ}[e:better]
\big|\E_c \{\sigma_{x_1},\ldots,\sigma_{x_4}\}\big| \lesssim \f{\sum_{y \in \Z^d} C(x_1,y)\cdots C(x_4,y)}
{\big(\sum_{|x| \le M}C(0,x)^2\big)^c}\;,
\end{equ}
for some (possibly very small) $c>0$. Here, one assumes that the $x_i$'s are all at distances at
least $M$ of each other. 

\begin{remark}
If one believes that the bound \eqref{e:boundCorr} represents the correct behaviour of $C$ at criticality, then
the denominator appearing in \eqref{e:better} is of order $(\log M)^c$ in dimension $4$. This however
is not known and is also not used by \cite{Phi44}, whether for deriving \eqref{e:better} or for 
deducing Theorem~\ref{theo:Gauss} from it.
\end{remark}

The proof of \eqref{e:better} relies on the ``random current'' representation of 
the Ising model in which the configuration space consists of ``currents'', namely
maps $\bn\colon E\to \N$ where $E$ denotes the set of (unoriented) nearest-neighbour pairs in
our lattice. The Ising measure then naturally leads to a weight $w$ on currents 
defined by $w(\bn) = \prod_{e \in E} \f{\beta^{\bn(e)}}{\bn(e)!}$ as well as the notion of ``source''
of a current given by
\begin{equ}
\d \bn \eqdef \Big\{x \,:\, \sum_{e \ni x} \bn(e) \,\text{is odd}\Big\}\;.
\end{equ}
The link between currents and the Ising model is the following formula. Given any finite set $A \subset \Z^d$,
one has
\begin{equ}
\E \prod_{a \in A}\sigma_a = \f{\sum_{\bn\,:\, \d\bn = A} w(\bn)}{\sum_{\bn\,:\, \d\bn = \emptyset} w(\bn)}\;.
\end{equ}
A natural notion then is that of a ``random current with source $A$'' for which the probability of seeing a
given current $\bn$ is non-vanishing only when $\d\bn = A$ in which case it is proportional to $w(\bn)$.
When $A = \{x,y\}$, a current $\bn$ with source $A$ can be interpreted (not uniquely!) as the occupation 
measure of a collection of loops
in $\Z^d$, together with a non self-intersecting path joining $x$ and $y$. In particular, the restriction of 
$\bn$ to the collection of loops connected (either directly or indirectly through other loops) to the path
joining $x$ and $y$ can be thought of as the occupation measure of one single random path joining $x$ to $y$. 

The bound \eqref{e:better} can then be reformulated in terms of intersection properties of such 
random paths. From a heuristic perspective, one gets a lot of mileage from thinking of these random paths
as simple random walk trajectories. Note that dimension $4$ is critical for the question whether the traces of two 
random walk trajectories intersect or not: in $d < 4$, the trajectories of two independent random walks
with any two starting points will intersect almost surely. In $d > 4$ on the other hand, they only intersect
with positive probability (going to $0$ as the two starting points are taken far from each other) and,
if they do, they only have a finite number of intersection points. 
In dimension $d=4$, the probability that two random walks starting at distance of order $M$ from each other do
intersect decays like $1/\log M$, but the \textit{expected} number of intersection times remains of order one
as $M \to \infty$. This shows that if they do intersect, then the number of intersection points is typically quite large,
of order $\log M$.

The bulk of the hard work performed in \cite{Phi44} is to show that the random paths arising in the
 random cluster representation of the Ising model at criticality exhibit a similar behaviour, but with $\log M$
 replaced by some quantity of size at least $(\log M)^c$ for some $c > 0$.
The argument is a masterpiece combining a delicate multiscale analysis, topological arguments, 
and probabilistic reasoning.
One of the main problem the authors have to overcome is the fact that these random paths
are \textit{very} far from being simple random walks and only satisfy some spatial version of the Markov 
property.

\section{Rotational invariance for the critical FK models}

As already mentioned a number of times, a crucial feature of $2d$ equilibrium statistical mechanics is
the fact that most models are expected to obey a form of conformal invariance (or equivariance)
when considering large-scale observables at the critical temperature. 
This expectation and the resulting link to the well understood world of $2d$ conformal field theories 
allows to generate a plethora of conjectures regarding the large-scale behaviour of these models, but these
are in many cases extremely hard to prove. Consider for example the $N$-step
$2d$ self-avoiding 
random walk which is simply the uniform measure on all functions $h \colon \{0,\ldots,N\} \to \Z^2$
such that $h(0) = 0$ and such that $|h(i+1) - h(i)| = 1$ for all $i < N$. Exploiting the expected
conformal invariance of its suitably rescaled large-$N$ limit, one expects the size of $h(N)$
to be of order $N^{3/4}$ and its rescaling by $N^{3/4}$ to converge to a specific continuous
random curve, namely SLE$_{8/3}$ \cite{LSW}. Rigorously, almost \textit{nothing} non-trivial is known:
although the diameter of the range of $h$ trivially has to be at least $\sqrt{N/\pi}$, the current
best lower bound on the endpoint does not even match that! Instead, one only knows 
the bound $(\E |h(N)|^p)^{1/p} \ge \f16 N^{p/(2p+2)}$ that was recently obtained by 
Madras \cite{Madras}. Similarly, while one trivially 
has $|h(N)| \le N$, the best non-trivial upper bound is pretty much the weakest
possible improvement, namely that for every $p \ge 1$ one has 
$\lim_{N \to \infty} N^{-1}(\E |h(N)|^p)^{1/p} = 0$, obtained around the same time
by Duminil-Copin and Hammond \cite{SAWUp}. One main obstruction is that there is at the moment
no proof showing that the self-avoiding random walk is conformally invariant at large scales.

While this illustrates the importance of showing that statistical models are conformally
invariant (or at least rotationally invariant as a crucial first step) at criticality, the 
strategy of proof for such claims has so far mostly relied on finding a large enough
collection of observables that already satisfy a discrete analogue of conformal invariance,
typically by solving a discrete analogue of the Cauchy--Riemann equations. See for example
Chelkak and Smirnov's proof of conformal invariance for the Ising model on isoradial
graphs \cite{MR2957303} and Smirnov's proof of conformal invariance
for critical percolation \cite{Stas}.
The two-dimensional FK model with $q \le 4$ already mentioned in Section~\ref{sec:Disc} is one of the simplest 
models where conformal invariance at criticality is expected, but where it is not known how to obtain this
from a suitable discrete conformal invariance.
In the recent work \cite{Rot}, Duminil-Copin et al.\ show that the large-scale behaviour of these
models is indeed rotationally invariant.

To define the notion of ``large-scale behaviour'', we recall that the configuration space of the 
FK model is the same as that for regular percolation, see Figure~\ref{fig:perco}. Such a configuration
can alternatively be described as a collection of non self-intersecting loops separating the percolation
clusters from the clusters of the dual configuration. (Actually it naturally yields 
\textit{two} collections of loops, depending on whether the loop encloses a percolation cluster 
of the primary or of the dual configuration, but we will ignore this detail for the 
sake of our exposition.) Given two collections $\CF$ and $\bar \CF$ of
non self-intersecting loops in the plane, one then defines a distance between them in the following way.
Given (small) $\eta > 0$, write $\CB_\eta \subset \R^2$ for a large chunk of a fine lattice in $\R^2$,
for example $\CB_\eta = \eta \Z^2 \cap [-\eta^{-1},\eta^{-1}]^2$.
Given a loop $\gamma$ and assuming that its image doesn't intersect the set $\CB_\eta$, 
one then denotes by $[\eta]_\gamma$ its homotopy class in $\R^2 \setminus \CB_\eta$.
One then postulates that $d_H(\CF,\bar \CF) \le \eta$  if and only if, for every
$\gamma \in \CF$ that encloses at least two elements of $\CB_\eta$ but not all of it, 
there exists $\bar \gamma \in \bar \CF$ such that $[\gamma]_\eta = [\bar\gamma]_\eta$
and vice-versa. (The $H$ here stands for `homotopy'.)

Given a metric space $(M,d)$, the metric $d$ lifts naturally to a metric on the space 
of probability measures on $M$ which metrises the topology of weak convergence (at least when
$M$ is ``nice'', for example Polish). This is done by considering the Wasserstein 
(also sometimes called Kantorovich--Rubinstein or Monge--Kantorovich) distance
\begin{equ}
d(\mu,\nu) = \inf_{\P \in \CC(\mu,\nu)} \int d(x,y)\,\P(dx,dy)\;,
\end{equ}
where $\CC(\mu_1,\mu_2)$ denotes the set of all couplings between $\mu_1$ and $\mu_2$,
that is probability measures on $M^2$ 
with $i$th marginal equal to $\mu_i$. Note that with this definition, the map that assigns
to $x$ the probability measure $\delta_x$ concentrated at $x$ is an isometry. 

Fix now once and for all $q \in [1,4]$ and consider
a smooth bounded simply connected domain $\Omega \subset \R^2$. For $\eps > 0$,
write $\P_{\eps,\Omega}$ for the critical FK measure (viewed as a measure on collections of loops) 
on $\eps \Z^2 \cap \Omega$ with free boundary conditions. We also write $\P_{\eps}$ for the 
limit of $\P_{\eps,\Omega}$ as $\Omega \to \R^2$. Given an angle $\theta \in \R$,
we also write $R_\theta$ for the rotation by $\theta$, which naturally acts on loops in $\R^2$.
The large-scale rotational
invariance of the critical FK model can then be formulated as follows.

\begin{theorem}\label{theo:FKrot}
For every domain $\Omega \subset \R^2$ as above and every angle $\theta$ one has 
\begin{equ}
\lim_{\eps \to 0} d_H\big(R_\theta^*\P_{\eps,\Omega}, \P_{\eps,R_\theta\Omega}\big) = 0\;.
\end{equ}
Furthermore, one has $\lim_{\eps \to 0} d_H(R_\theta^*\P_{\eps}, \P_{\eps}) = 0$.
\end{theorem}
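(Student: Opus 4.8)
The plan is to exploit the Yang–Baxter integrability of the critical FK model, which survives on certain \emph{isoradial} (rhombic) embeddings of the square lattice. First I would recall that the critical $q\in[1,4]$ FK weights can be tuned, vertex by vertex, to a one-parameter family of integrable ``isoradial'' weights indexed by the rhombus angles; the transfer matrices associated to two different angles commute, which is the algebraic incarnation of the fact that one can locally deform the lattice by a star–triangle (Yang–Baxter) move without changing the law of the macroscopic loop ensemble. The key combinatorial input is that any two isoradial embeddings of (a piece of) $\Z^2$ with the same asymptotic ``track directions'' can be connected by a finite sequence of such elementary moves, and that each move leaves the FK loop ensemble invariant in distribution. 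Thus one gets, essentially for free, that the scaling limit — if it exists — is invariant under the class of affine maps realizable by changing the two track directions, which is a two-parameter family of linear maps (shears/dilations) rather than the full rotation group.

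The second, and genuinely hard, step is to \emph{upgrade} invariance under this restricted family of linear deformations to invariance under an arbitrary rotation $R_\theta$. A single rotation is not of the special ``two-track-direction'' form, so one must iterate: one writes $R_\theta$ as a composition (or a limit of compositions) of the available elementary deformations, each of which is close to the identity, and controls the accumulated error in the homotopy metric $d_H$. Concretely, I would fix the scale at which one wants to see loops, subdivide the domain into mesoscopic boxes, and on each box apply a near-identity integrable deformation; the resulting global map is a diffeomorphism close to $R_\theta$, and one then has to argue (i) that the piecewise-deformed measure is genuinely equal in law to the undeformed one on the relevant $\sigma$-algebra, and (ii) that a diffeomorphism that is $C^1$-close to $R_\theta$ moves the loop ensemble a small amount in $d_H$. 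Part (ii) requires an a priori regularity/tightness estimate on the loop ensemble — uniform bounds on crossing probabilities of annuli (Russo–Seymour–Welsh-type estimates for FK with $q\le 4$), which are available — so that no macroscopic loop is ``pinched'' by the small perturbation. The main obstacle is that the star–triangle moves do not respect the box decomposition at its seams, so one accumulates a layer of ``defect'' edges along an $\eps$-mesh network of interfaces; showing that these defect layers do not affect the macroscopic loops — i.e. that connectivity is not destroyed or created along the seams in the scaling limit — is where the delicate multiscale coupling argument of \cite{Rot} lives.

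The third step is to assemble the pieces: by Step 1 plus the iteration of Step 2 one obtains, for every $\delta>0$, a coupling of $R_\theta^*\P_{\eps,\Omega}$ and $\P_{\eps,R_\theta\Omega}$ under which the two loop ensembles agree, up to the defect network, on all loops enclosing at least two points of $\CB_\eta$; choosing $\eta$ (hence the relevant scale) fixed and sending $\eps\to 0$ first, the defect network shrinks to measure zero by the RSW bounds, so that $d_H(R_\theta^*\P_{\eps,\Omega},\P_{\eps,R_\theta\Omega})\to 0$. For the full-plane statement one runs the same argument on $\P_\eps = \lim_{\Omega\to\R^2}\P_{\eps,\Omega}$, using that the elementary deformations are local and hence commute with the infinite-volume limit, together with translation invariance of $\P_\eps$ to handle the centering of the rotation. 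The only subtlety here is exchanging the limits $\Omega\to\R^2$ and $\eps\to0$, which is again controlled by uniform annulus-crossing estimates. I expect Step 2 — turning ``invariance under shears along two track directions'' into genuine rotational invariance while controlling the seam defects in the homotopy metric — to be by far the main difficulty; Steps 1 and 3 are, respectively, an appeal to classical Yang–Baxter integrability and a relatively soft tightness/metric argument.
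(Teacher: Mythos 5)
Your starting point (isoradial deformations of the square lattice and Yang--Baxter/track-exchange integrability, with the six-vertex transfer matrix in the background) is indeed the one used in \cite{Rot}, but the proposal has a genuine gap where you declare the first step to be ``essentially for free''. You assert that each elementary move leaves the macroscopic loop ensemble invariant in distribution, so that invariance under the corresponding family of lattice deformations follows immediately. In the actual argument this is precisely where almost all of the work lies: the comparison between the standard lattice $L(0)$ and a deformed lattice $L(\alpha)$ is only \emph{approximate} in the homotopy distance $d_H$, because each track exchange $T_j$ displaces vertices, and one must perform on the order of $N$ such exchanges to transport the tilted tracks from outside a window of size $N$ into it. The accumulated displacement of the marked set $\CB_\eta$ could a priori be ballistic, which would destroy the homotopy classes one is trying to preserve; the heart of the proof is to show, using the transfer-matrix solvability and delicate stochastic cancellations, that this displacement is only diffusive. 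Your sketch assumes exact invariance per move and thereby skips the main estimate.

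Second, your ``genuinely hard'' Step 2 --- synthesising $R_\theta$ from many near-identity deformations applied box by box, with defect seams controlled by RSW --- is both doubtful and unnecessary. Track exchanges are operations on entire rows, not local moves compatible with a mesoscopic box decomposition, and you give no mechanism by which the seam ``defect layers'' could be made consistent with the FK Gibbs property; as stated this step would not go through. The paper avoids it entirely by a symmetry trick: once one knows $\lim_{\eps\to 0} d_H(\P_{\eps,L(0)},\P_{\eps,L(\alpha)})=0$ for the \emph{constant}-angle lattice $L(\alpha)$, one notes that $L(\alpha)$ is invariant under reflection in a line at angle $\tfrac{\pi}{4}-\tfrac{\alpha}{2}$ while $L(0)$ is invariant under reflection at angle $\tfrac{\pi}{4}$, and the composition of these two reflections is exactly the rotation $R_\alpha$; the triangle inequality then gives $d_H(\P_\eps,R_\alpha^*\P_\eps)\le 2\,d_H(\P_{\eps,L(0)},\P_{\eps,L(\alpha)})$. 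So the upgrade from deformation invariance to genuine rotational invariance is a one-line argument, and the whole difficulty is concentrated in the single comparison $L(0)$ versus $L(\alpha)$ that you treated as free. (A minor further difference: the paper deduces the finite-domain statement from the full-plane one, rather than running the argument in $\Omega$ and then exchanging the limits $\Omega\to\R^2$ and $\eps\to 0$ as you propose.)
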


\begin{figure}
\begin{center}
  \begin{tikzpicture}[scale=0.55]
    \clip(0.8,-0.1) rectangle (13.2,8);
    % loop over the lattice points
  	\pgfmathsetmacro{\oldHor}{0}
  	\pgfmathsetmacro{\oldVert}{0}
  	\pgfmathtruncatemacro{\j}{0}
    
    \draw[very thin,draw=black!10] (0,0) -- (14,0);
    
    \pgfplotsforeachungrouped \angle in {0,10,-5,20,15,-12,-10,0} {
  		\pgfmathsetmacro{\vertPos}{\oldVert+cos(\angle)}
	  	\pgfmathsetmacro{\horPos}{\oldHor+sin(\angle)}
	  	\pgfmathtruncatemacro{\j}{\j+1}
      \foreach \i in {0,...,14}{
	  	\pgfmathtruncatemacro{\sumCoor}{\j+\i}
      	\ifodd\sumCoor
			\gdef\thisColor{black}
			\gdef\otherColor{white}
	        \draw (\horPos+\i,\vertPos) -- (\oldHor+\i+1,\oldVert);
	        \draw (\horPos+\i,\vertPos) -- (\oldHor+\i-1,\oldVert);
		\else
			\gdef\thisColor{white}
			\gdef\otherColor{black}
		\fi
        \draw[very thin,draw=black!10] (\horPos+\i,\vertPos) -- (\horPos+\i+1,\vertPos);
        \draw[very thin,draw=black!10] (\horPos+\i,\vertPos) -- (\oldHor+\i,\oldVert);

        \draw[fill=\thisColor] (\horPos+\i,\vertPos) circle(2pt);
        \draw[fill=\otherColor] (\oldHor+\i,\oldVert) circle(2pt);
      };
	  	\pgfmathsetmacro{\oldHor}{\horPos}
	  	\pgfmathsetmacro{\oldVert}{\vertPos}
	};
  \end{tikzpicture}
  \hspace{5mm}
  \begin{tikzpicture}[scale=0.55]
    \clip(2.3,-0.1) rectangle (10.2,8);
    % loop over the lattice points
  	\pgfmathsetmacro{\oldHor}{0}
  	\pgfmathsetmacro{\oldVert}{0}
    
    \draw[very thin,draw=black!10] (0,0) -- (14,0);
    \def\angle{20}
    \pgfplotsforeachungrouped \j in {1,...,10} {
  		\pgfmathsetmacro{\vertPos}{\oldVert+cos(\angle)}
	  	\pgfmathsetmacro{\horPos}{\oldHor+sin(\angle)}
      \pgfplotsforeachungrouped \i in {0,...,10}{
	  	\pgfmathtruncatemacro{\sumCoor}{\j+\i}
      	\ifodd\sumCoor
			\gdef\thisColor{black}
			\gdef\otherColor{white}
	        \draw (\horPos+\i,\vertPos) -- (\oldHor+\i+1,\oldVert);
	        \draw (\horPos+\i,\vertPos) -- (\oldHor+\i-1,\oldVert);
		\else
			\gdef\thisColor{white}
			\gdef\otherColor{black}
		\fi
        \draw[very thin,draw=black!10] (\horPos+\i,\vertPos) -- (\horPos+\i+1,\vertPos);
        \draw[very thin,draw=black!10] (\horPos+\i,\vertPos) -- (\oldHor+\i,\oldVert);

        \draw[fill=\thisColor] (\horPos+\i,\vertPos) circle(2pt);
        \draw[fill=\otherColor] (\oldHor+\i,\oldVert) circle(2pt);
        \ifnum \i=3 \ifnum \j=3
	  		\pgfmathsetmacro{\xx}{\horPos+\i}
	  		\pgfmathsetmacro{\yy}{\vertPos}
        \fi\fi
      };
	  	\pgfmathsetmacro{\oldHor}{\horPos}
	  	\pgfmathsetmacro{\oldVert}{\vertPos}
	};
	\draw[red!80!black] (\xx,\yy) -- ++(45-\angle/2:8);
	\draw[red!80!black] (\xx,\yy) -- ++(225-\angle/2:7);
  \end{tikzpicture}
\end{center}
\caption{Examples of graphs $L(\alpha)$. On the left is a generic $\alpha$ while on the right $\alpha$ is constant but non-zero. The graph itself is drawn in black, the vertices of its dual graph are drawn in white, and the associated diamond graph is light gray. In red, we draw one of the symmetry axes of the second graph.}\label{fig:lattice}
\end{figure}
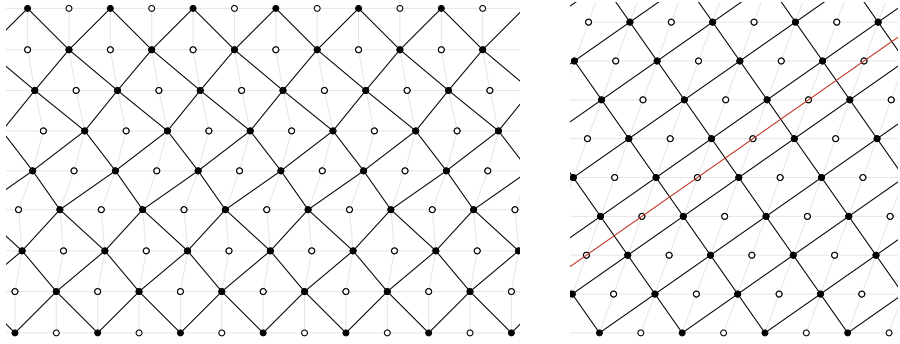

We only focus on the second statement since it turns out that the first one can be deduced from it
without too much effort.
In fact, the authors of \cite{Rot} show a type of universality statement 
for the FK model on rectangular lattices, but its formulation requires some preparation. 
We start by defining a specific class of isoradial embeddings of the two-dimensional 
square lattice into the plane.
Recall that a planar graph embedded in the plane is isoradial if, for each face $f$, 
there exists a circle of radius $1$ containing all the vertices of $f$. (For example, the 
canonical embedding of the square lattice is isoradial.)

Given a bi-infinite sequence $\alpha \colon \Z \to (-\f\pi2,\f\pi2)$, we consider the map $\iota_\alpha \colon \Z^2 \to \R^2$ given by
\begin{equ}
\iota_\alpha\colon (x,y) \mapsto \bigl(x + s_y,c_y\bigr)\;,\qquad s_y = \sum_{k\in (0,y]} \sin(\alpha_k)\;,\quad
c_y = \sum_{k\in (0,y]} \cos(\alpha_k)\;,
\end{equ}
with the convention that for $y<0$, $\sum_{(0,y]} = -\sum_{(y,0]}$.
This defines an isoradial graph $L(\alpha)$ by considering the embedding of 
$\{(x,y)\,:\, x+y \,\text{even}\}$ (joined by diagonal edges) under $\iota_\alpha$ (see Figure~\ref{fig:lattice}).
The dual graph $L^*(\alpha)$ of $L(\alpha)$ is then given by the embedding of 
$\{(x,y)\,:\, x+y \,\text{odd}\}$. The associated ``diamond graph'' has as its vertices 
both the vertices of $L(\alpha)$ and the centres of its faces, and its edges are given by all pairs 
$(v,f)$ with $v$ a vertex and $f$ a face such that $v \in f$. 
The diamond graph is simply given by the embedding of the usual lattice $\Z^2$
with nearest-neighbour edges under $\iota_\alpha$.

It is crucial at this stage to note that the critical FK model on $L(\alpha)$ is \textit{not} given
by simply pushing forward the critical FK model on $\Z^2$ under the map $\iota_\alpha$. Instead,
one reweighs each edge of the graph in a very specific way that depends on the length of the edge.
More specifically, viewing a configuration of the FK model as a subset $\omega \subset E$ 
of the set of edges of the (finite) graph on which the model is considered, the probability of 
seeing a given configuration $\omega$ is proportional to 
\begin{equ}[e:defFK]
\Big(\prod_{e \in \omega}p_e\Big)\Big(\prod_{e \in E\setminus \omega}(1-p_e)\Big) q^{k(\omega)}\;,
\end{equ}
where $k(\omega)$ denotes the number of connected components of the subgraph $\omega$.
The formula for $p_e$ as a function of $q$ and the length of the edge $e$ is explicit but not
relevant for the sake of this discussion.

The most important 
step in the proof is to show that the large-scale connectivity properties of 
the critical FK model on $L(\alpha)$ are very close to those of the model on $L(T_j \alpha)$, where
$T_j$ swaps the $j$th and $(j+1)$th component:
\begin{equ}
(T_j \alpha)_k = 
\left\{\begin{array}{cl}
	\alpha_{j+1} & \text{if $k=j$,} \\
	\alpha_{j} & \text{if $k=j+1$,} \\
	\alpha_k & \text{otherwise.}
\end{array}\right.
\end{equ}
Furthermore, there exists a natural coupling between the FK measures on the two lattices
which implements this ``closedness''.
This part of the proof exploits the link to the six vertex model and its ``solvability''
using the transfer matrix formalism. One then deduces from this that the model on the
standard lattice $L(0)$ is very close to that on a rotated rectangular lattice $L(\alpha)$
with $k \mapsto \alpha_k$ constant (see the right half of Figure~\ref{fig:lattice}).
This works by fixing some large $N>0$ (which is then eventually sent to infinity) 
and starting from $\alpha^{(i)}_k = \alpha \one_{k \ge N}$
and then swapping components in such a way as to move some of the non-zero components down 
until une ends up with $\alpha^{(f)}_k = \alpha (\one_{|k| \le N} + \one_{k > 3N})$.
Since one has $L(0) \approx L(\alpha^{(i)})$ and $L(\alpha) \approx L(\alpha^{(f)})$, the desired
statement follows if one can control the error made at each step of the argument.
This turns out to be extremely delicate and one has to exploit subtle stochastic 
cancellations along the way. One trick is to allow the vertices of the set $\CB_\eta$ around
which the homotopy classes are computed to move a little bit with each application of 
a swapping operator $T_j$ and to show that this motion ends up being diffusive (and therefore ``slow'')
rather than ballistic.

Once one knows that $\lim_{\eps \to 0}d_H(\P_{\eps,L(0)}, \P_{\eps,L(\alpha)}) = 0$, 
the second part of Theorem~\ref{theo:FKrot} follows at once. The idea is simply to note that
$L(\alpha)$ is invariant under reflection along a line with angle $\f{\pi}4-\f\alpha2$,
but that the effect of this reflection on $L(0)$ is the same as that of a rotation by angle $\alpha$
(since it is itself invariant under reflection along a line with angle $\f\pi4$),
so that 
\begin{equ}
d_H(\P_{\eps}, R_\alpha^*\P_{\eps})
\le d_H(\P_{\eps,L(0)}, \P_{\eps,L(\alpha)}) + d_H(\P_{\eps,L(\alpha)},R_\alpha^*\P_{\eps,L(0)})
= 2 d_H(\P_{\eps,L(0)}, \P_{\eps,L(\alpha)})\;,
\end{equ}
and the claim follows.

\begin{funding}
This work was partially supported by the Royal Society through a research professorship.
\end{funding}

%------
% Insert the bibliography.
%------
\newcommand{\newblock}{}
\bibliographystyle{emss}
\bibliography{refs}

\end{document}